\let\E=\EE
\let\epsilon=\varepsilon
\let\eps=\epsilon
\def\PP{\Pr}
\def\rm{\mathbb{RM}}
\DeclareMathOperator{\match}{match}
\DeclareMathOperator{\perm}{perm}
   \def\MR#1{}
\title{Ordered unavoidable sub-structures in matchings and random matchings}%\footnote{\protect\myfootnotetitle}}
\author{Andrzej Dudek\thanks{Supported in part by Simons Foundation Grant \#522400.}\\
\small Department of Mathematics\\[-0.8ex]
\small Western Michigan University\\[-0.8ex]
\small Kalamazoo, MI, USA\\
\small\tt andrzej.dudek@wmich.edu\\
\and
Jaros\l aw Grytczuk\thanks{Supported in part by Narodowe Centrum Nauki, grant 2015/17/B/ST1/02660.}\\
\small Faculty of Mathematics and Information Science\\[-0.8ex]
\small Warsaw University of Technology\\[-0.8ex]
\small Warsaw, Poland\\
\small\tt j.grytczuk@mini.pw.edu.pl\\
\and
Andrzej Ruci\'nski\thanks{Supported in part by Narodowe Centrum Nauki, grant 2018/29/B/ST1/00426.}\\
\small Department of Discrete Mathematics\\[-0.8ex]
\small Adam Mickiewicz University\\[-0.8ex]
\small Pozna\'n, Poland\\
\small\tt rucinski@amu.edu.pl}
\author{Andrzej Dudek\authornote{1}
\and
Jaros{\l a}w Grytczuk\authornote{2}
\and
Andrzej Ruci\'nski\authornote{3}
}
\email{andrzej.dudek@wmich.edu}).}
\email{j.grytczuk@mini.pw.edu.pl}).}
\email{rucinski@amu.edu.pl}).}
\begin{document}

\maketitle

\begin{abstract} An \emph{ordered matching of size $n$} is a graph on a linearly ordered vertex set~$V$, $|V|=2n$, consisting of $n$ pairwise disjoint edges. There are three different ordered matchings of size two on $V=\{1,2,3,4\}$: an \emph{alignment} $\{1,2\},\{3,4\}$,   a \emph{nesting} $\{1,4\},\{2,3\}$, and a \emph{crossing} $\{1,3\},\{2,4\}$. Accordingly, there are three basic homogeneous types of ordered matchings (with all pairs of edges arranged  in the same way) which we call, respectively, \emph{lines}, \emph{stacks}, and \emph{waves}.
	
We prove an Erd\H{o}s--Szekeres type result guaranteeing in every ordered matching of size~$n$ the presence of one of the three basic sub-structures of a given size. In particular, one of them must be of size at least $n^{1/3}$. We also investigate the size of each of the three sub-structures in a \emph{random} ordered matching. Additionally, the former result is generalized to $3$-uniform ordered matchings.

Another type of unavoidable patterns we study are \emph{twins}, that is,  pairs of order-isomorphic, disjoint sub-matchings. By relating to a similar problem for permutations, we prove that the maximum size of twins that occur in every ordered matching of size~$n$ is $O\left(n^{2/3}\right)$ and $\Omega\left(n^{3/5}\right)$. We conjecture that the upper bound is the correct order of magnitude  and  confirm it for almost all matchings. In fact, our results for twins are proved more generally for $r$-multiple twins, $r\ge2$. \footnote{An extended abstract of this paper appears in~\cite{DGR_LATIN}.}
\end{abstract}

\maketitle

%\tableofcontents

%\setcounter{footnote}{1}

\section{Introduction}
\subsection{Background}
A graph $G$ is said to be \emph{ordered} if its vertex set is linearly ordered.
Let $G$ and $H$ be two ordered graphs with vertex sets $V(G)=\{v_1,\ldots,v_m\}$ and $V(H)=\{w_1,\ldots, w_m\}$, and the respective linear orders $v_1<\cdots <v_m$ and $w_1<\cdots<w_m$, for some integer $m\ge1$. We say that $G$ and $H$ are \emph{order-isomorphic} if for all $1\le i<j\le m$, $v_iv_j\in E(G)$ if and only if $w_iw_j\in E(H)$. Note that every pair of order-isomorphic graphs is isomorphic, but not vice-versa. Also, if $G$ and $H$ are distinct graphs on the same linearly ordered vertex set $V$, then $G$ and $H$ are never order-isomorphic, and so all $2^{\binom{|V|}2}$ labeled graphs on $V$ are pairwise non-order-isomorphic. It shows that the notion of order-isomorphism makes sense only for pairs of graphs on distinct vertex sets.

One context in which order-isomorphism makes quite a difference is that of subgraph containment. If $G$ is an ordered graph, then any subgraph $G'$ of $G$ can be also treated as an ordered graph with the ordering of $V(G')$ inherited from the ordering of $V(G)$.
Given  two ordered graphs, (a ``large'' one) $G$ and (a ``small'' one) $H$, we say that a subgraph $G'\subset G$ is \emph{an ordered copy of $H$ in $G$} if $G'$ and $H$ are order-isomorphic. We will sometimes denote this fact by writing $G'\preceq G$.

All kinds of questions concerning subgraphs in unordered graphs can be posed for ordered graphs as well (see, e.g., \cite{Tardos} and \cite{BGT}). For example, in \cite{BCKK} and \cite{ConlonFoxSudakov} the authors studied Tur\'an and Ramsey type problems for ordered graphs. In particular, they showed independently that there exists an ordered matching on $n$ vertices for which the (ordered) Ramsey number is super-polynomial in~$n$, a sharp contrast with the linearity of the Ramsey number for ordinary (i.e.,~unordered) matchings. This shows that it makes sense to study even such seemingly simple structures as ordered matchings. In fact, Jel\'inek  \cite{VJ} counted the number of matchings avoiding (i.e.,~not containing) a given small ordered matching.

\subsection{Topics and organization}
In this paper we focus exclusively on  \emph{ordered matchings}, that is, ordered graphs which consist of vertex-disjoint edges (and have no isolated vertices).
For example, in Figure \ref{Matchings1}, we depict two ordered matchings, $M=\{\{1,3\},\{2,4\},\{5,6\}\}$ and $N=\{\{1,5\},\{2,3\},\{4,6\}\}$ on vertex set $\{1,2,3,4,5,6\}$ with the natural linear ordering.
\begin{figure}[ht]
	
	\begin{center}
		
		\resizebox{13cm}{!}{
			
			\includegraphics{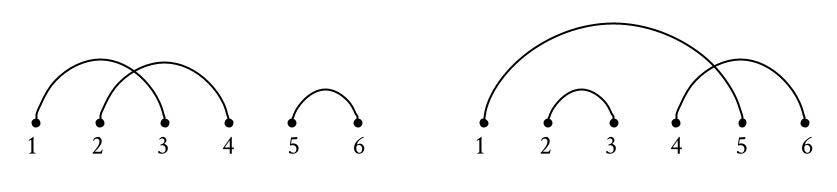}
			
		}
		
		\caption{Exemplary matchings $M$ and $N$.}
		\label{Matchings1}
	\end{center}
	
\end{figure}
Unlike in \cite{VJ}, we study what sub-structures are \emph{unavoidable} in ordered matchings. A frequent theme in both fields, the theory of ordered graphs as well as enumerative combinatorics, are unavoidable sub-structures, that is, \emph{patterns} that appear in every member of a prescribed family of structures. A flagship example providing everlasting inspiration is the famous theorem of Erd\H {o}s and Szekeres \cite{ErdosSzekeres} on monotone subsequences (see \cite{BKP,BucicSudakovTran,BukhMatousek,EliasMatousek,FoxPachSudakovSuk,MoshkovitzShapira,SzaboTardos} for some recent extensions and generalizations). In its diagonal form it states that any sequence $x_1,x_2,\ldots, x_n$ of distinct real numbers contains an increasing or decreasing subsequence of length at least~$\sqrt n$.

 And, indeed, our first goal is to prove its analog for ordered matchings. The reason why the original  Erd\H os--Szekeres Theorem lists only two types of subsequences is, obviously, that for any two elements $x_i$ and $x_j$ with $i<j$ there are just two possible relations: $x_i< x_j$ or $x_i> x_j$. For matchings, however,  for every two  edges $\{x,y\}$ and $\{u,w\}$ with $x<y$, $u<w$, and $x<u$,  there are three possibilities: $y<u$ , $w<y$, or $u<y<w$ (see Figure~\ref{Matchings6}). In other words, every two edges form either \emph{an alignment}, \emph{a nesting}, or \emph{a crossing} (the first term introduced by Kasraoui and Zeng in \cite{KZ}, the last two terms coined in by Stanley \cite{Stanley}). These three possibilities give rise, respectively,
 to three ``unavoidable'' ordered sub-matchings (\emph{lines}, \emph{stacks}, and \emph{waves}) which play an analogous role to the monotone subsequences in the classical Erd\H os--Szekeres Theorem. (In \cite{Stanley}, stacks and waves consisting of $k$ edges were called, respectively, \emph{$k$-nestings} and \emph{$k$-crossings}.)

 \begin{figure}[ht]
 	
 	\begin{center}
 		
 		\resizebox{13cm}{!}{
 			
 			\includegraphics{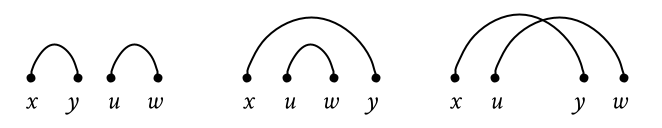}
 			
 		}
 		
 		\caption{An alignment, a nesting, and a crossing of a pair of edges.}
 		\label{Matchings6}
 	\end{center}
 	
 \end{figure}
  Informally, lines, stacks, and waves are defined by demanding that every pair of edges in a sub-matching  forms, respectively, an alignment, a nesting, or a crossing (see Figure \ref{Matchings3}).
  In Subsection \ref{arb_match} we  show, in particular, that every ordered matching of size $n$ contains one of these structures of size at least  $n^{1/3}$. This special case was also proved by Huynh, Joos and Wollan
(see Lemma~25 in~\cite{HJW2019}). In the remainder of Section \ref{unav}, we first extend this result to 3-uniform ordered matchings and then study the size of the largest lines, stacks, and waves in random matchings.

 Our second goal is to estimate the size of the largest (ordered) twins in ordered matchings. The problem of twins has been widely studied for other combinatorial structures, including words, permutations, and graphs (see, e.g., \cite{APP,LLSud}).
For an integer $r\ge2$, we say that $r$ edge-disjoint (ordered) subgraphs $G_1,G_2,\dots, G_r$ of an (ordered) graph $G$ form \emph{(ordered) twins in $G$} if they are pairwise (order-)isomorphic. The size of the (ordered) twins is defined as $|E(G_1)|=\cdots=|E(G_r)|$. For ordinary matchings, the notion of $r$-twins becomes trivial, as every matching of size $n$ contains twins of size $\lfloor n/r\rfloor$ -- just split the matching into $r$ as equal as possible parts. But for ordered matchings the problem becomes interesting. The above mentioned analog of the Erd\H os--Szekeres Theorem immediately yields (again by splitting into $r$ equal parts) ordered $r$-twins of length $\lfloor n^{1/3}/r\rfloor$. We provide much better estimates on the size of largest $r$-twins in ordered matchings (Subsection \ref{twins_am}) and random matchings (Subsection \ref{twins_rm}) which, not so surprisingly,  are of the same order of magnitude as those for $r$-twins in permutations (see \cite{BukhR,DGR}).

\subsection{Random matchings}\label{rm}
 As indicated above, we examine both questions, of unavoidable sub-matchings and of twins, also for \emph{random} matchings.  A random (ordered) matching $\rm_{n}$ is selected uniformly at random from all
\[
\alpha_n:=\frac{(2n)!}{n!2^n}
\]
ordered matchings on vertex set $[2n]$. Among other results, we show that with probability tending to 1, as $n\to\infty$, or \emph{asymptotically almost surely} (a.a.s.), there are in $\rm_{n}$ lines, stacks, \emph{and} waves of size, roughly, $\sqrt n$, as well as (ordered) twins of size $\Theta(n^{2/3})$.

There are two other ways of generating $\rm_{n}$ which we are going to utilize in the proofs. Besides the above defined \emph{uniform} scheme, we define the \emph{online} scheme as follows. For an arbitrary ordering of the vertices $u_1,\dots,u_{2n}$ one selects uniformly at random a match, say $u_{j_1}$, for $u_1$ (in $2n-1$ ways), then, after crossing out $u_1$ and $u_{j_1}$ from the list, one selects uniformly at random a match for the first  uncrossed vertex (in $2n-3$ ways), and so on. Note that the total number of ways to select a matching this way is $(2n-1)(2n-3)\cdot\ldots\cdot 3\cdot 1 = (2n-1)!!$ which equals $\alpha_n$. A third equivalent way to generate $\rm_{n}$ is particularly convenient when  one intends to apply concentration inequalities available for random permutations. The \emph{permutation based} scheme boils down to just generating a random permutation $\Pi:=\Pi_n$ of $[2n]$ and ``chopping it off'' into a matching $\{\Pi(1)\Pi(2),\;\Pi(3)\Pi(4),\dots,\Pi(2n-1)\Pi(2n)\}$. Note that this way each matching corresponds, as it should, to exactly $n!2^n$ permutations.
We will stick mostly to the uniform scheme, applying the other two only occasionally.

 %\subsection{Results}
 \subsection{Gauss codes}

A convenient representation of ordered matchings can be obtained in terms of \emph{double occurrence words} over an $n$-letter alphabet, in which every letter occurs exactly twice as the label of the ends of the corresponding edge in the matching. For instance, our two exemplary matchings can be written as $M=ABABCC$ and $N=ABBCAC$ (see Figure~\ref{Matchings2}).
In fact, this is a special case of an elementary combinatorial bijection between ordered partitions of a set and permutations with repetitions.
 A minor nuisance here is that ordered matchings correspond to unordered partitions, so every permutation of the letters yields the same matching.
 Nevertheless, we will sometimes  use this representation  to better illustrate some notions and ideas.

\begin{figure}[ht]
	
	\begin{center}
		
		\resizebox{13cm}{!}{
			
			\includegraphics{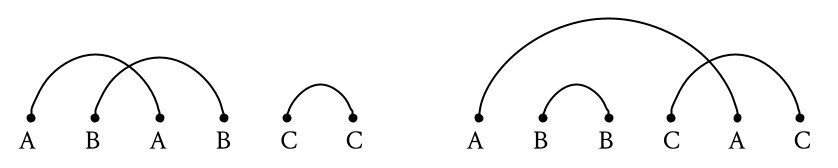}
			
		}
		
		\caption{Exemplary matchings $M=ABABCC$ and $N=ABBCAC$.}
		\label{Matchings2}
	\end{center}
	
\end{figure}
Interestingly, this type of words was introduced and studied already by Gauss \cite{Gauss} as a way of encoding closed self-intersecting curves on the plane (with points of multiplicity at most two). Indeed, denoting the self-crossing points by letters and traversing the curve in a fixed direction gives a (cyclic) word in which every letter occurs exactly twice (by the multiplicity assumption) (see Figure \ref{Matchings4}). A general problem studied by Gauss was to characterize those words that correspond to such curves. He found himself a necessary condition, but a full solution (in terms of some quite involved constraints on an auxiliary  graph of crossing pairs) was obtained much later (see \cite{ShtyllaTZ} for a brief history and further references).

It is, perhaps, also worthwhile to mention that ordered matchings constitute a special case of structures, known as \emph{Puttenham diagrams}, that found an early application in the theory of poetry (see \cite{Puttenham,PuttenhamEdited}). A basic idea is simple: a rhyme scheme of a poem can be encoded by a word in which same letters correspond to rhyming verses. Of particular interest here are \emph{planar rhyme schemes} which are nothing else but ordered matchings without crossings, or more generally, \emph{noncrossing partitions} (see \cite{StanleyCatalan}).

\begin{figure}[ht]
	
	\begin{center}
		
		\resizebox{10cm}{!}{
			
			\includegraphics{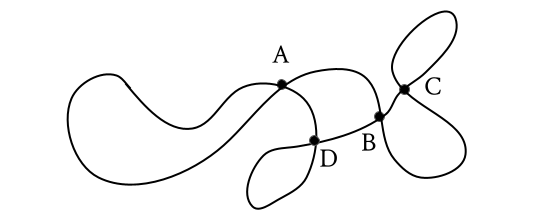}
			
		}
		
		\caption{A curve with Gauss code $ABCCBDDA$.}
		\label{Matchings4}
	\end{center}
	
\end{figure}

\section{Unavoidable sub-matchings}\label{unav}

Let us start with  formal definitions. Let $M$ be an ordered matching on the vertex set $[2n]$, with edges denoted as $e_i=\{a_i,b_i\}$ so that $a_i<b_i$, for all $i=1,2,\ldots,n$, and $a_1<\cdots<a_n$. We say that an edge $e_i$ is \emph{to the left of} $e_j$ and write $e_i<e_j$ if $a_i<a_j$. That is, in ordering the edges of a matching we ignore the positions of the right endpoints.

We  now define the three basic types of ordered matchings:
\begin{itemize}
	\item Line: $a_1<b_1<a_2<b_2<\cdots<a_n<b_n$,
	\item Stack: $a_1<a_2<\cdots<a_n<b_n<b_{n-1}<\cdots<b_1$,
	\item Wave: $a_1<a_2<\cdots<a_n<b_1<b_2<\cdots<b_n$.
\end{itemize}
Assigning letter $A_i$ to edge $\{a_i,b_i\}$, their corresponding double occurrence  words look as follows:
\begin{itemize}
	\item Line: $\quad A_1A_1A_2A_2\cdots A_nA_n$,
	\item Stack: $A_1A_2\cdots A_{n-1}A_nA_nA_{n-1}\cdots A_1$.
	\item Wave: $A_1A_2\cdots A_nA_1A_2\cdots A_n$.
\end{itemize}
Each of these three types of ordered matchings can be equivalently characterized as follows. Let us consider all possible ordered matchings with just two edges. In the double occurrence word notation these are $AABB$ (an \emph{alignment}), $ABBA$ (a \emph{nesting}), and $ABAB$ (a \emph{crossing}). Now a line, a stack, and a wave is an ordered matching in which \emph{every} pair of edges forms, respectively, an alignment, a nesting, and  a crossing (see Figure \ref{Matchings3}).

Note that alignments, crossings and nestings are just special instances (the smallest non-trivial) of, resp., lines, stacks, and waves, and throughout  we will use these names interchangeably.

\begin{figure}[ht]
	
	\begin{center}
		
		\resizebox{13cm}{!}{
			
			\includegraphics{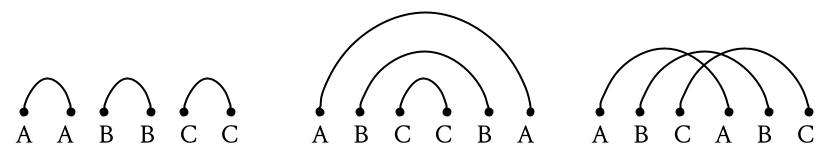}
			
		}
		
		\caption{A line, a stack, and a wave of size three.}
		\label{Matchings3}
	\end{center}
	
\end{figure}

\subsection{In arbitrary matchings}\label{arb_match} Consider a sub-matching $M'$ of $M$ and an edge $e\in M\setminus M'$, which is to the left of the left-most edge $f$ of $M'$.
Note that if $M'$ is a line and $e$ and $f$ form a line, then $M'\cup\{e\}$ is a line too. Similarly, if  $M'$ is a stack and $\{e,f\}$ form a nesting, then $M'\cup\{e\}$ is a stack too.
However, an analogous statement fails to be true for waves, as~$e$, though crossing $f$, may not necessarily cross all other edges of the wave $M'$.
Due to this observation, in the proof of our first result we will need another type of ordered matchings combining lines and waves. We call a matching $M=\{\{a_i,b_i\}:\; i=1,\dots,n\}$ with $a_i<b_i$, for all $i=1,2,\ldots,n$, and $a_1<\cdots<a_n$, \emph{a landscape} if $b_1<b_2<\cdots<b_n$, that is, the right-ends of the edges of $M$ are also linearly ordered (a first-come-first-serve pattern). Notice  that there are no non-trivial stacks in a landscape. In the double occurrence word notation,  a landscape is just a word obtained by a \emph{shuffle} of the two copies of the word $A_1A_2\cdots A_n$. Examples of landscapes for $n=4$ are, among others, $\color{red}{ABCD}\color{blue}{ABCD}$, $\color{red}{A}\color{blue}{A}\color{red}{BC}\color{blue}{BC}\color{red}{D}\color{blue}{D}$, $\color{red}{ABC}\color{blue}{AB}\color{red}{D}\color{blue}{CD}$ (the last one is depicted in Figure~\ref{Matchings10}).

\begin{figure}[ht]
	
	\begin{center}
		
		\resizebox{7cm}{!}{
			\includegraphics{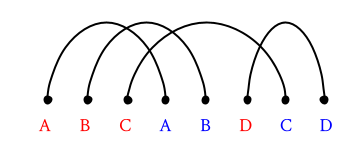}
		}
		
		\caption{A landscape of size four.}
		\label{Matchings10}
	\end{center}
	
\end{figure}

The following is an Erd\H{o}s--Szekeres type result for ordered matchings.

\begin{theorem}\label{Theorem E-S for LSW}
	Let $\ell,s,w$ be arbitrary positive integers and let $n=\ell sw+1$. Then, every ordered matching on $2n$ vertices contains a line of size $\ell+1$, or a stack of size $s+1$, or a wave of size $w+1$.
\end{theorem}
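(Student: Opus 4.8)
The plan is to reduce this three-way statement to two successive applications of the poset form of the Erd\H{o}s--Szekeres theorem, using the \emph{landscape} as an intermediate structure, exactly as the discussion preceding the theorem suggests. Throughout, I would identify each edge $e_i=\{a_i,b_i\}$ with the open interval $(a_i,b_i)$ on the line, so that a pair of edges is aligned, nesting, or crossing according to whether the two intervals are disjoint, nested, or properly overlapping, respectively. The only tool needed is the elementary poset lemma: \emph{in any finite poset with more than $pq$ elements there is a chain of size $p+1$ or an antichain of size $q+1$}. (If every chain has size at most $p$, assign to each element the length of a longest chain ending at it, a value in $\{1,\dots,p\}$; comparable elements receive different values, so each of the at most $p$ level sets is an antichain, and by pigeonhole one of them has more than $q$ elements.)

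First I would order the $n$ edges of $M$ by containment, declaring $e_i\sqsubset e_j$ whenever the interval of $e_i$ is strictly contained in that of $e_j$. A chain in this order is a family of pairwise nesting edges, that is, a \emph{stack}, while an antichain is a family of pairwise non-nesting edges; since any two non-nesting edges $e_i<e_j$ satisfy $b_i<b_j$, an antichain is precisely a \emph{landscape}. As $n=\ell s w+1>s\cdot(\ell w)$, the lemma (with $p=s$, $q=\ell w$) yields either a stack of size $s+1$, in which case we are done, or a landscape $\cL$ of size at least $\ell w+1$.

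Next I would work inside $\cL$, where by the previous identification every pair of edges is either aligned or crossing. Order $\cL$ by the relation \emph{entirely-left-of}, setting $e\lessdot f$ iff $b_e<a_f$ (transitivity and antisymmetry are immediate from $a<b$). Because the right endpoints of a landscape increase together with its left endpoints, comparability in this order coincides with being aligned: a chain is a set of pairwise aligned edges, i.e.\ a \emph{line}, and an antichain is a set of pairwise crossing edges, i.e.\ a \emph{wave}. Since $|\cL|\ge \ell w+1>\ell w$, the lemma (now with $p=\ell$, $q=w$) produces a line of size $\ell+1$ or a wave of size $w+1$, completing the argument.

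The only real subtlety is the one flagged in the text: a wave does not grow by prepending a single crossing edge, so there is no hope of a one-shot proof that assigns each edge a monotone triple recording its best line, stack, and wave simultaneously. The two-step detour through landscapes removes exactly this difficulty, because in both posets a wave arises as an \emph{antichain} — whose certification through the chain-length level sets requires no extension step — rather than as a greedily grown structure. Consequently the steps demanding care are not any calculation but the two structural identifications (chains/antichains $=$ stacks/landscapes in the containment order, and lines/waves in the entirely-left-of order), both of which rely on all $2n$ endpoints being distinct and on the defining monotonicity of landscapes.
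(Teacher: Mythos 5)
Your proof is correct. The skeleton is the same as the paper's — first a dichotomy between stacks and landscapes, then, inside the landscape, a dichotomy between lines and waves — but the engine driving each dichotomy is genuinely different. The paper proves the first step with the classical Erd\H{o}s--Szekeres pair trick: to each edge it attaches the pair $(s_i,L_i)$ recording the largest stack and the largest landscape whose leftmost edge is $e_i$, and argues these pairs are pairwise distinct; this hinges on checking that stacks and landscapes are extendable by an edge prepended on the left. The second step is proved there by greedily decomposing the landscape into waves and observing that the leftmost edges of the consecutive waves form a line, so either some wave is large or there are many waves. You replace both arguments by a single abstract tool, the Mirsky-type lemma (a poset on more than $pq$ elements has a chain of size $p+1$ or an antichain of size $q+1$), applied to two well-chosen partial orders: interval containment, whose chains are stacks and whose antichains are exactly landscapes, and the entirely-left-of order, whose chains are lines and whose antichains, within a landscape, are exactly waves. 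Your route buys uniformity and economy — one lemma, two orders, and no extendability analysis or greedy construction, since (as you rightly emphasize) waves arise as antichains and so never need to be grown edge by edge. The paper's route buys a concrete structural picture, namely the explicit wave decomposition of a landscape with its line of leftmost edges, and stays self-contained in the spirit of the original Erd\H{o}s--Szekeres argument; that extendability analysis is also what the paper later revisits when explaining why the $3$-uniform case resists a direct attack. Both yield the same sharp bound, since your two applications use $n > s(\ell w)$ and $|\mathcal{L}| > \ell w$ exactly as the paper does.
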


\begin{proof}
Let $M$ be any ordered matching with edges $\{a_i,b_i\}$, $i=1,2,\ldots, n$. Let $s_i$ denote the size of a largest stack whose left-most edge is $\{a_i,b_i\}$. Similarly, let $\lambda_i$ be the largest size of a landscape whose left-most edge is  $\{a_i,b_i\}$. Consider the sequence of pairs $(s_i,\lambda_i)$, $i=1,2,\ldots,n$. We argue that no two pairs of this sequence may be equal. Indeed, let $i<j$ and consider the two edges $\{a_i,b_i\}$ and $\{a_j,b_j\}$. These two edges may form a nesting, an alignment, or a crossing. In the first case we get $s_i>s_j$, since the edge $\{a_i,b_i\}$ enlarges the largest stack starting at $\{a_j,b_j\}$. In the two other cases, we have $\lambda_i>\lambda_j$ by  the same argument. Since the number of pairs $(s_i,\lambda_i)$ is $n>s\cdot \ell w$, it follows that either $s_i>s$ for some $i$, or $\lambda_j>\ell w$ for some $j$. In the first case we are done, as there is a stack of size $s+1$ in $M$.

In the second case, assume that $L$ is a landscape in $M$ of size at least $p=\ell w+1$. Let us order the edges of $L$ as $e_1<e_2<\cdots<e_p$, accordingly to the linear order of their left ends. Decompose $L$ into edge-disjoint waves, $W_1, W_2,\ldots, W_k$, in the following way. For the first wave $W_1$, pick $e_1$ and all edges whose left ends are between the two ends of $e_1$, say, $W_1=\{e_1<e_2<\ldots<e_{i_1}\}$, for some $i_1\geqslant 1$. Clearly, $W_1$ is a true wave since there are no nesting pairs in $L$. Also notice that the edges $e_1$ and $e_{i_1+1}$ are non-crossing since otherwise the latter edge would be included in $W_1$. Now, we may remove the wave $W_1$ from $L$ and repeat this step for $L-W_1$ to get the next wave $W_2=\{e_{i_1+1}<e_{i_1+2}<\ldots<e_{i_2}\}$, for some $i_2\geqslant i_{1}+1$. And so on, until exhausting all edges of $L$, while forming the last wave $W_k=\{e_{i_{k-1}+1}<e_{i_{k-1}+2}<\ldots<e_{i_k}\}$, with $i_k\geqslant i_{k-1}+1$. Clearly, the sequence $e_1<e_{i_1+1}<\ldots<e_{i_{k-1}+1}$ of the leftmost edges of the waves $W_i$ must form a line (see Figure~\ref{Matchings5}).
\begin{figure}[ht]
	
	\begin{center}
		
		\resizebox{15cm}{!}{
			\includegraphics{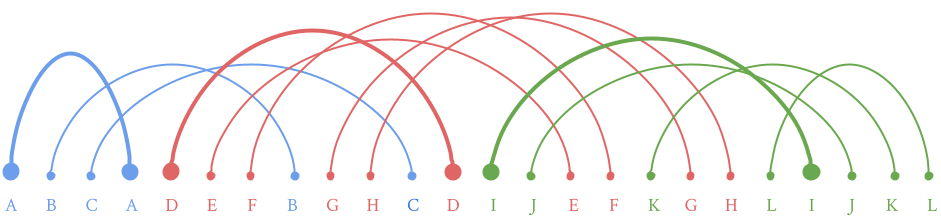}
		}
		
		\caption{Greedy decomposition of a landscape into waves. The left-most edges of the waves (in bold) form a line.}
		\label{Matchings5}
	\end{center}
	
\end{figure}
So, if $k\geqslant \ell+1$, we are done. Otherwise, we have $k\leqslant \ell$, and because $p=\ell w+1$, some wave $W_i$ must have at least $w+1$ edges. This completes the proof.
\end{proof}

It is not hard to see that the above result is optimal. For example, consider the case $\ell=5$, $s=3$, $w=4$. Take $3$ copies of the wave of size $w=4$: $\color{red}{ABCDABCD}$, $\color{blue}{PQRSPQRS}$, $\color{orange}{XYZTXYZT}$. Arrange them into a stack-like structure (see Figure~\ref{Matchings9}): $$\color{red}{ABCD}\color{blue}{PQRS}\color{orange}{XYZTXYZT}\color{blue}{PQRS}\color{red}{ABCD}.$$
\begin{figure}[ht]
	
	\begin{center}
		
		\resizebox{15cm}{!}{
			\includegraphics{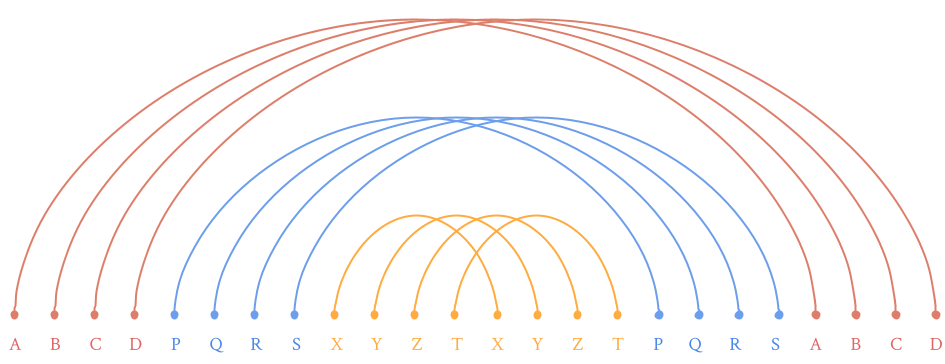}
		}
		
		\caption{A stack of waves.}
		\label{Matchings9}
	\end{center}
	
\end{figure}
Now, concatenate  $\ell=5$ copies of this structure. Clearly, we obtain a matching of size $\ell sw=5\cdot 3\cdot 4$ with no line of size $6$, no stack of size $4$, and no wave of size $5$. This example can be easily generalized to get the following fact.
\begin{proposition}
	For every positive integers $\ell,s$ and $w$ there exists a matching of size $n=\ell sw$ with all lines, waves, and stacks of size at most $\ell,s$ and $w$, respectively.
\end{proposition}

By forbidding one of the three basic structures to be present in $M$ and setting the corresponding parameter, $\ell,s$, or $w$ to 1, we immediately deduce from Theorem \ref{Theorem E-S for LSW} that one of the other two structures of an appropriately large size must be present in $M$. For example, in a landscape (i.e.,~no nestings) of size $n\ge \ell w+1$ one can find either a line of size $\ell+1$ or a wave of size $w+1$. More interestingly, forbidding an alignment we obtain what we define in Section \ref{twins_am} as  permutational matchings which are in a one-to-one correspondence with permutations of order $n$. Moreover, under this bijection waves and stacks in a permutational matching $M$ correspond to, respectively, increasing and decreasing subsequences of the permutation which is the image of $M$. Thus, we recover the original Erd\H os--Szekeres Theorem as a special case of our Theorem \ref{Theorem E-S for LSW}.

Finally, we formulate separately the diagonal case of Theorem \ref{Theorem E-S for LSW}.

\begin{corollary}\label{ESequal}
	Every ordered matching on $2n$ vertices contains a line,  a stack, or a wave of size at least $n^{1/3}$.
\end{corollary}

\begin{proof}
Given $n$. Clearly, $\lceil n^{1/3} \rceil -1< n^{1/3}$. Hence, $(\lceil n^{1/3} \rceil -1)^3 < n$ and so $(\lceil n^{1/3} \rceil-1)^3 + 1\le n$. Applying Theorem~\ref{Theorem E-S for LSW} with $\ell=s=w=\lceil n^{1/3} \rceil -1$ implies one of the three structures of size $(\lceil n^{1/3} \rceil -1)+1 \ge n^{1/3}$.
\end{proof}

\subsection{3-uniform matchings}\label{section:hyper}

 It is perhaps natural and interesting  to try to generalize Theorem \ref{Theorem E-S for LSW} to \emph{$r$-uniform ordered matchings}, that is, families of $n$ disjoint $r$-element subsets of the linearly ordered set $[rn]$. Here we make the first step by showing that the case $r=3$ follows relatively easily from Theorem \ref{Theorem E-S for LSW} itself.

 At the start the problem seems a bit overwhelming, as there are $\tfrac12\binom63=10$ different ways in which two triples may intertwine.
  Using the \emph{triple occurrence words}, they are $AAABBB$, $AABABB$, $AABBBA$, $AABBAB$, $ABBBAA$, $ABBAAB$, $ABBABA$, $ABAABB$, $ABABBA$, $ABABAB$. We will call them \emph{patterns} or \emph{relations} since these words describe a way in which any two edges are interwoven with each other. What is worse, one of them, $AABABB$, stands out as a culprit who spoils the otherwise nice picture. To see it through, call an ordered pair of triples $(e,f)$ \emph{collectable} if for each $k\ge1$ there exists a collection of $k$ triples such that every pair of them is order-isomorphic to the pair $(e,f)$.

  For instance, the relation $AAABBB$, which we may call, as before, \emph{an alignment}, is collectable,  as for any $k$ one can take $A_1A_1A_1A_2A_2A_2\dots A_kA_kA_k$. Similarly, for $AABBBA$, say, one may take
  $(A_1A_1A_2A_2\dots A_kA_k)(A_k\dots A_2A_1)$. In fact, all nine relations but $AABABB$ are collectable. However, for $AABABB$, which we may call \emph{an engagement}, one cannot even add a third triple $CCC$. Indeed, if it were possible, then there should be a $C$ between the second and third $A$, but after the first $B$, which makes the relation between $B$'s and $C$'s not an engagement, as it would begin with $BC$. (Here we assumed w.l.o.g. that the first $A$ precedes the first $B$ which precedes the first $C$.)

  Due to this annoying exception, the Erd\H os--Szekeres type result we are going to prove is not as clean as its predecessors for singletons and pairs. For reasons, which will become clear once we reveal our proof strategy, we ``give names to all the animals'' as  presented in the first two columns of Table~\ref{table:relations}.

  To explain this encoding, let us denote the three basic graph configurations as $\mathtt{L}=AABB$, $\mathtt{S}=ABBA$, and $\mathtt{W}=ABAB$, accordingly to their (alternative) names, that is, line, stack, and wave.
  Now, each of the ten pairwise relations of triples can be uniquely decomposed into a pair of pairs consisting, resp., of the first two vertices in both triples and the last two vertices in both triples. For example, the relation $AABBAB$ decomposes into a line $AABB$ and a wave $ABAB$.
  We express this fact by writing $AABBAB=AABB\oplus ABAB$ and denoting relation $AABBAB$ by $\mathcal{R}_{\mathtt{LW}}$.

   Obviously, for a given 3-uniform relation this decomposition is unique. However, this mapping is not one-to-one as there are only nine different ordered pairs made of three elements. And, indeed,  the alignment $AAABBB$  and the engagement $AABABB$  both decompose into the same the pair $AABB,\;AABB$. To distinguish between them, we add asterisk for the latter, that is, we denote $AAABBB$ by $\mathcal{R}_{\mathtt{LL}}$ while
   $AABABB$ by $\mathcal{R}^*_{\mathtt{LL}}$.

  The last column of Table~\ref{table:relations} displays  this mapping using, again, the words with underlined and overlined letters. The three relations with the first subscript $\mathtt{S}$ differ in that the letters in the second component of the decomposition $\oplus$ are reversed; indeed,   if the pairs of the first two elements of two triples form the sequence $ABBA$, then the pairs of the last two elements can only form sequences $BBAA, BABA$, or $BAAB$. But, let us emphasize that as Gauss words  $BBAA=AABB$, etc.

\begin{table}[h!]
\centering
\begin{tabular}{ |c||c||c| }
\hline

\textsc{relation}& \textsc{in words} & \textsc{decomposition $\oplus$}\\
 \hline
 \hline
${\mathcal R}_{\tt{LL}}$ & $AAABBB$ & $AABB\oplus AABB=\underline{A}\overline{\underline{A}}\overline{A}\underline{B}\overline{\underline{B}}\overline{B}$\\
 \hline
${\mathcal R}^*_{\tt{LL}}$ & $AABABB$ &  $AABB\oplus AABB=\underline{A}\underline{\overline{A}}\underline{B}\overline{A}\underline{\overline{B}}\overline{B}$\\
 \hline
${\mathcal R}_{\tt{LS}}$ & $AABBBA$ &  $AABB\oplus ABBA=\underline{A}\overline{\underline{A}}\underline{B}\overline{\underline{B}}\overline{B}\overline{A}$\\
 \hline
${\mathcal R}_{\tt{LW}}$ & $AABBAB$ &  $AABB\oplus ABAB=\underline{A}\overline{\underline{A}}\underline{B}\overline{\underline{B}}\overline{A}\overline{B}$\\
 \hline
${\mathcal R}_{\tt{SL}}$ & $ABBBAA$ & $ABBA\oplus BBAA=\underline{A}\underline{B}\overline{\underline{B}}\overline{B}\overline{\underline{A}}\overline{A}$\\
 \hline
${\mathcal R}_{\tt{SS}}$ & $ABBAAB$ &  $ABBA\oplus BAAB=\underline{A}\underline{B}\overline{\underline{B}}\overline{\underline{A}}\overline{A}\overline{B}$\\
 \hline
${\mathcal R}_{\tt{SW}}$ & $ABBABA$ &  $ABBA\oplus BABA=\underline{A}\underline{B}\overline{\underline{B}}\overline{\underline{A}}\overline{B}\overline{A}$\\
 \hline
${\mathcal R}_{\tt{WL}}$  & $ABAABB$ & $ABAB\oplus AABB=\underline{A}\underline{B}\overline{\underline{A}}\overline{A}\overline{\underline{B}}\overline{B}$\\
 \hline
${\mathcal R}_{\tt{WS}}$ & $ABABBA$ & $ABAB\oplus ABBA=\underline{A}\underline{B}\underline{\overline{A}}\underline{\overline{B}}\overline{B}\overline{A}$\\
 \hline
${\mathcal R}_{\tt{WW}}$ & $ABABAB$ & $ABAB\oplus ABAB=\underline{A}\underline{B}\underline{\overline{A}}\underline{\overline{B}}\overline{A}\overline{B}$\\
 \hline
\end{tabular}
\caption{Possible relations of two triples and their corresponding decompositions $\oplus$.}
\label{table:relations}
\end{table}

 To cope with  engagements, we will  ``marry'' them with alignments by combining relations ${\mathcal R}_{\mathtt{LL}}$ and ${\mathcal R}^*_{\mathtt{LL}}$ together.
 First, as in the case of pairs, define \emph{a line} as an ordered 3-uniform matching $M$ such that all pairs of triples of $M$ are in relation ${\mathcal R}_{\mathtt{LL}}$, that is, each pair forms an alignment. Call $M$ \emph{a semi-line} if all pairs of triples  are in either relation ${\mathcal R}_{\mathtt{LL}}$ or ${\mathcal R}^*_{\mathtt{LL}}$, that is, they form an alignment or an engagement.

 \begin{proposition}\label{lines_in_semi-lines}
 Every semi-line of size $k$ contains a line of size at least $k/2$.
 \end{proposition}

\proof Let $e_1,\dots,e_k$ be a semi-line. Define an auxiliary graph $G$ on vertex set $[k]$ where $ij\in E(G)$ if $e_i$ and $e_j$ form an engagement. It turns out that $G$ is a linear forest. To prove it, assume w.l.o.g. that for all $i<j$ the left-most vertex of $e_i$ is to the left of the left-most vertex of $e_j$. We claim that  every vertex $i$  has at most one neighbor $j>i$. Indeed, if there were edges $ij$ and $ih$, $i<j<h$, then, identifying $e_i,e_j$ and $e_h$, respectively, with letters $A,B$ and $C$, we would have a sequence $AABCA\dots$, where the last 4 positions are occupied by 2 letters $B$ and 2 letters $C$ in an arbitrary order. This means, however, that $B$'s and $C$'s, or equivalently $e_j$ and $e_h$, form neither an alignment nor an engagement, a contradiction with the definition of a semi-line. By symmetry, any vertex $i$ has also at most one neighbor $j<i$. So, $G$ is, indeed, a linear forest and as such has an independent number at least $k/2$. This completes the proof. \qed

\medskip

We are now ready to formulate an Erd\H os--Szekeres type result for 3-uniform matchings.

\begin{theorem}\label{Theorem E-S for triples}
	Let $a_{XY}$, where $X,Y\in\{\mathtt{L,S,W}\}$, be arbitrary positive integers and let
$$n=\prod_{X,Y}a_{XY}+1.$$ Then, every ordered $3$-uniform matching $M$ on $3n$ vertices either contains a semi-line of size $a_{\mathtt{LL}}+1$, or there exists $(X,Y)\neq(\mathtt{L,L})$ such that $M$ contains a sub-matching of $a_{XY}+1$ triples every two of which are in relation ${\mathcal R}_{XY}$ (as defined in Table~\ref{table:relations}).
\end{theorem}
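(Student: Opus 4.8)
The plan is to deduce the statement from two applications of Theorem~\ref{Theorem E-S for LSW}, via the lower/upper decomposition recorded in the last column of Table~\ref{table:relations}. Writing each triple $e_i$ with vertices $a_{i,1}<a_{i,2}<a_{i,3}$, I would split it into its \emph{lower edge} $e_i^-=\{a_{i,1},a_{i,2}\}$ and its \emph{upper edge} $e_i^+=\{a_{i,2},a_{i,3}\}$. Since the triples are pairwise disjoint, the families $M^-=\{e_i^-\}$ and $M^+=\{e_i^+\}$ are each ordinary ordered matchings of size $n$, on their respective $2n$-vertex subsets of $[3n]$ with the inherited order. (The upper edges need not be listed in the same left-to-right order as the triples, but this is irrelevant: Theorem~\ref{Theorem E-S for LSW} applies to $M^+$ as a standalone ordered matching.)

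The first thing to record is the exact meaning of the $\oplus$ column: for any two triples the relation between them is completely determined by the pair (type of their two lower edges, type of their two upper edges), where ``type'' is one of alignment ($\ell$), nesting ($s$), or crossing ($w$). Concretely, two triples are in relation $\mathcal{R}_{x,y}$ precisely when their lower edges form an $x$ and their upper edges form a $y$ — with the sole exception that lower-alignment together with upper-alignment yields either $\mathcal{R}_{\ell,\ell}$ ($AAABBB$) or $\mathcal{R}^*_{\ell,\ell}$ ($AABABB$), according to whether the largest vertex of the left triple falls before or after the smallest vertex of the right one. I would verify this by the short case analysis already implicit in the table; the only mildly delicate point is precisely this $(\ell,\ell)$ degeneracy, which is exactly what forces us to package $\mathcal{R}_{\ell,\ell}$ and $\mathcal{R}^*_{\ell,\ell}$ into the single notion of a semi-line. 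In particular, a sub-matching $T$ is a semi-line iff both $T^-$ and $T^+$ are lines, while for $(x,y)\neq(\ell,\ell)$ the triples of $T$ are pairwise in relation $\mathcal{R}_{x,y}$ iff $T^-$ is an $x$-structure and $T^+$ is a $y$-structure (line, stack, or wave according to $\ell,s,w$).

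With this dictionary in hand it suffices to find a large sub-matching that is simultaneously homogeneous in its lower and upper edges. Set $P_y=\prod_x a_{xy}$, so that $n=P_\ell P_s P_w+1$. First I apply Theorem~\ref{Theorem E-S for LSW} to $M^+$ with parameters $(P_\ell,P_s,P_w)$: since $M^+$ has size $P_\ell P_s P_w+1$, it contains an upper-line, upper-stack, or upper-wave; call its type $y_0$ and the corresponding set of triples $U$, so that $|U|=P_{y_0}+1=\prod_x a_{x y_0}+1$ and all upper edges of $U$ have pairwise type $y_0$. Then I apply Theorem~\ref{Theorem E-S for LSW} a second time, now to the lower matching $U^-$ (of size $a_{\ell y_0}a_{s y_0}a_{w y_0}+1$) with parameters $(a_{\ell y_0},a_{s y_0},a_{w y_0})$, obtaining a sub-matching $T\subseteq U$ of size $a_{x_0 y_0}+1$ whose lower edges have pairwise type $x_0$, for some $x_0\in\{\ell,s,w\}$.

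The triples of $T$ then have pairwise type $x_0$ in their lower edges (from the second application) and pairwise type $y_0$ in their upper edges (inherited from $U$). By the dictionary, if $(x_0,y_0)=(\ell,\ell)$ then $T$ is a semi-line of size $a_{\ell\ell}+1$, and otherwise $T$ is a sub-matching of $a_{x_0 y_0}+1$ triples pairwise in relation $\mathcal{R}_{x_0,y_0}$ — which is exactly the required conclusion. I expect the only genuine work to be the verification of the dictionary, namely the determination of $\mathcal{R}_{x,y}$ from the lower/upper types together with the identification of the $(\ell,\ell)$ degeneracy; once that is in place, the two invocations of Theorem~\ref{Theorem E-S for LSW} and the arithmetic $n=P_\ell P_s P_w+1$ and $P_{y_0}=\prod_x a_{x y_0}$ fit together without friction.
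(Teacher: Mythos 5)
Your proposal is correct and takes essentially the same route as the paper: both proofs split each triple into the pair of its first two elements and the pair of its last two elements, apply Theorem~\ref{Theorem E-S for LSW} twice (with the parameter products arranged so the sizes match up), and rely on the dictionary of Table~\ref{table:relations}, in which only the pair (lower alignment, upper alignment) is ambiguous between $\mathcal{R}_{\ell,\ell}$ and $\mathcal{R}^*_{\ell,\ell}$ --- exactly the degeneracy the semi-line notion absorbs. The only (immaterial) difference is the order of the two applications: you homogenize the upper edges first and then refine by the lower edges, while the paper starts with the lower (first-two-element) matching and then refines by the upper one.
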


\begin{remark}
As in Corollary \ref{ESequal} one can show that every 3-uniform ordered matching of size $n$ contains one of the nine  sub-structures  listed in Theorem \ref{Theorem E-S for triples} of size at least $n^{1/9}$.
\end{remark}

\begin{remark}\label{counter}
At the moment we are not able to find a construction showing optimality of Theorem \ref{Theorem E-S for triples}. It was relatively easy in the graph case, as both, stacks and waves, had the \emph{interval} chromatic number equal to 2, which enabled one to superimpose one into another by blowing up the vertices of one of them and filling its edges with  copies of the other (see Figure~\ref{Matchings9} for the superimposition of  a wave of size 4 upon a stack of size 3).  Following, e.g.,~\cite{FJKMV}, we say that an ordered $r$-uniform hypergraph has \emph{interval chromatic number $r$} if it is $r$-partite with the partition sets forming consecutive blocks of the linearly ordered vertex set.

 Now, out of the eight 3-uniform relations, alignments and engagements aside, only four  have interval chromatic number $3$, namely those not having $\mathtt{L}$ within their indices: ${\mathcal R}_{\mathtt{SS}}=ABBAAB$,   ${\mathcal R}_{\mathtt{SW}}=ABBABA$, ${\mathcal R}_{\mathtt{WS}}=ABABBA$, and   ${\mathcal R}_{\mathtt{WW}}=ABABAB$. Thus, we can provide a counterexample showing optimality of Theorem \ref{Theorem E-S for triples} only in the special case when $a_{\mathtt{LS}}=a_{\mathtt{LW}}=a_{\mathtt{SL}}=a_{\mathtt{WL}}=1$.  We do so by superimposing the  four ``no-$\mathtt{L}$'' relations and then taking $a_{\mathtt{LL}}$ disjoint copies of the obtained construction. The superimposition is done by carefully replacing each triple of the current matching with a matching obeying mutually the new relation. E.g.,  replacing $ABABAB$ (relation ${\mathcal R}_{\mathtt{WW}}$)  with $CD\;EF\;CD\;EF\;DC\;FE$ results in  a matching where four pairs of triples retain relation ${\mathcal R}_{\mathtt{WW}}$, while two pairs ($C-D$ and $E-F$) enjoy the new relation~${\mathcal R}_{\mathtt{WS}}$. It is crucial here that consecutive blocks of size 2 in $CD\;CD\;DC$ each contain both letters $C$ and $D$, which is equivalent to having interval chromatic number $3$.

For the case when $a_{\tt{SS}}=a_{\tt{SW}}=a_{\tt{WS}}=a_{\tt{WW}}=2$ (and all other parameters set to 1) see  Figure~\ref{Matchings8}, where the pairwise relations between all 16 triples can be described as follows. Let us first focus on the 8 triples drawn with solid colored lines ($A,\dots,H$). Among them, all 4 monochromatic pairs of triples (like $A-B$)  are in relation ${\mathcal R}_{\mathtt{SS}}$, the 8 red-blue and orange-green pairs of triples (like $A-C$) obey relation ${\mathcal R}_{\mathtt{SW}}$, while the remaining 16 pairs (like $A-E$) satisfy relation ${\mathcal R}_{\mathtt{WS}}$. The very same is true for the 8 triples drawn with dashed colored lines ($I,\dots,P$).  Finally, each solid-dashed pair (like $A-I$) satisfies relation ${\mathcal R}_{\mathtt{WW}}$.
Note that, viewing this construction  as a complete graph $K_{16}$, each relation corresponds to a bipartite subgraph, and so, crucially, no three triples are mutually in the same relation.
\end{remark}
\begin{figure}[ht]
	
	\begin{center}
		
		\resizebox{15cm}{!}{
			\includegraphics{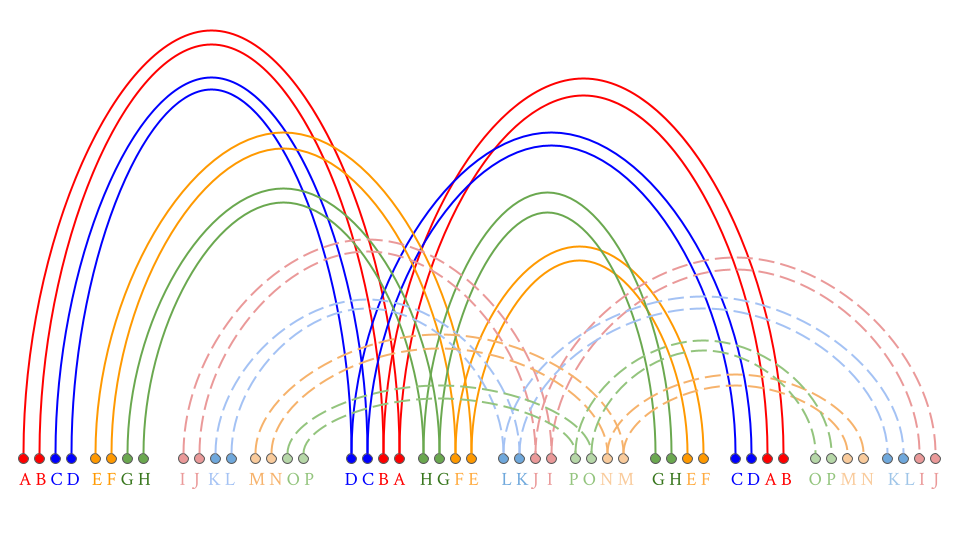}
		}
		
		\caption{A matching of $2^4=16$ triples illustrating optimality of Theorem \ref{Theorem E-S for triples} with $a_{\mathtt{SS}}=a_{\mathtt{SW}}=a_{\mathtt{WS}}=a_{\mathtt{WW}}=2$ (and all other parameters set to~1). }
		\label{Matchings8}
	\end{center}
	
\end{figure}

\begin{remark}
As in the case of graph matchings it is impossible to employ directly the original Erd\H os--Szekeres proof idea. To see this, call a relation ${\mathcal R}$ \emph{extendable} if, given a collection $M$ of disjoint triples  which are mutually in relation ${\mathcal R}$, every triple to the left of $M$ which is in  relation ${\mathcal R}$ with the left-most triple of $M$, is in relation ${\mathcal R}$ with all triples in $M$. For graphs, lines and stacks were extendable, while waves were not, which forced us to come up with the notion of a landscape. Now, only ${\mathcal R}_{\mathtt{LL}}$, ${\mathcal R}_{\mathtt{LS}}$, ${\mathcal R}_{\mathtt{SW}}$, and ${\mathcal R}_{SL}$ are extendable, so any direct proof of Theorem \ref{Theorem E-S for triples} would require more sophisticated analogs of landscapes. Fortunately, we deduce  Theorem \ref{Theorem E-S for triples} directly from Theorem \ref{Theorem E-S for LSW}.
\end{remark}

\begin{proof}[Proof of Theorem \ref{Theorem E-S for triples}]
Let $M$ be a matching (of triples) as in the assumptions of the theorem. Our proof strategy is to apply Theorem \ref{Theorem E-S for LSW} twice, first to the (graph) matching $M_1$ composed of pairs consisting of the first two elements of the triples in $M$, then to a suitably chosen sub-matching of the (graph) matching $M_1$ composed of pairs of the last two elements of the triples in $M$.

Set $b_{\mathtt{L}}=a_{\mathtt{LL}}a_{\mathtt{LS}}a_{\mathtt{LW}}$, $b_{\mathtt{S}}=a_{\mathtt{SL}}a_{\mathtt{SS}}a_{\mathtt{SW}}$, and $b_{\mathtt{W}}=a_{\mathtt{WL}}a_{\mathtt{WS}}a_{\mathtt{WW}}$ and apply Theorem \ref{Theorem E-S for LSW} to $M_1$ with $\ell:=b_{\mathtt{L}}$, $s:=b_{\mathtt{S}}$, and $w:=b_{\mathtt{W}}$. We will now examine the three alternatives of the conclusion.

{\bf Case ($\mathtt{L}$).} $M_1$ contains a (graph) line $L$ of size $b_{\mathtt{L}}+1$. Let $M_L$ be the sub-matching of $M_2$ composed of pairs of the last two elements of those triples in $M$ whose first two elements form a pair belonging to $L$. Formally,
$$M_L=\{\{j,k\}:\; i<j<k,\;\{i,j,k\}\in M,\quad\mbox{and}\quad\{i,j\}\in L\}.$$
We apply Theorem \ref{Theorem E-S for LSW} to $M_L$ with $\ell:=a_{\mathtt{LL}}$, $s:=a_{\mathtt{LS}}$, and $w:=a_{\mathtt{LW}}$ and examine the three possible outcomes.

{\bf Subcase $(\mathtt{LL})$.} $M_L$ contains a line \textit{\L} of size $a_{\mathtt{LL}}+1$. This is the troublesome case. Let $M_{\mathtt{LL}}$ be the sub-matching of $M$ consisting of all triples in $M$ whose first pair of elements belongs to $L$, while the last pair belongs to \textit{\L}. Consider any two triples in $M_{\mathtt{LL}}$. There are two possible relations they may be in: the alignment $\underline{A}\overline{\underline{A}}\overline{A}\underline{B}\overline{\underline{B}}\overline{B}$ and the engagement $\underline{A}\underline{\overline{A}}\underline{B}\overline{A}\underline{\overline{B}}\overline{B}$. Indeed, in both cases the first pairs and the last pairs, underlined and overlined, resp., form (graph) alignments. This shows that $M_{\mathtt{LL}}$ is a semi-line.

{\bf Subcase $(\mathtt{LS})$.} $M_L$ contains a stack ${S}$ of size $a_{\mathtt{LS}}+1$.  Let $M_{\mathtt{LS}}$ be the sub-matching of $M$ consisting of all triples in $M$ whose first pair of elements belongs to $L$, while the last pair belongs to ${S}$. Consider any two triples in $M_{\mathtt{LS}}$. Then they necessarily are in relation ${\mathcal R}_{\mathtt{LS}}$, that is, they form the word $\underline{A}\overline{\underline{A}}\underline{B}\overline{\underline{B}}\overline{B}\overline{A}$.

{\bf Subcase $(\mathtt{LW})$.} $M_L$ contains a wave ${W}$ of size $a_{\mathtt{LW}}+1$. Let $M_{\mathtt{LW}}$ be the sub-matching of $M$ consisting of all triples in $M$ whose first pair of elements belongs to $L$, while the last pair belongs to ${W}$. Consider any two triples in $M_{\mathtt{LW}}$. Then they necessarily are in relation ${\mathcal R}_{\mathtt{LW}}$, that is, they form the word
$\underline{A}\overline{\underline{A}}\underline{B}\overline{\underline{B}}\overline{A}\overline{B}$.

{\bf Cases ($\mathtt{S}$) and ($\mathtt{W}$).} Each case splits further, as before, into three subcases. It is straightforward to check that these altogether six subcases lead to the remaining six relations ${\mathcal R}_{\mathtt{SL}}$, ${\mathcal R}_{\mathtt{SS}}$, ${\mathcal R}_{\mathtt{SW}}$, ${\mathcal R}_{\mathtt{WL}}$, ${\mathcal R}_{\mathtt{WS}}$, ${\mathcal R}_{\mathtt{WW}}$, yielding each time the required size of the collection.
\end{proof}

\subsection{In random matchings}\label{section:random}
In this section we shall investigate the size of unavoidable structures one can find in \emph{random} ordered matchings with the emphasis on the three canonical patterns: lines, stacks, and waves. Recall that  $\rm_{n}$ is a random  (ordered) matching of size $n$, that is, a matching picked uniformly at random out of the set of all $\alpha_n:=(2n)!/(n!2^n)$ matchings on the set  $[2n]$.

Baik and Rains in~\cite{BaRa} (see also~\cite[Theorem 17]{Stanley}) determined the asymptotic distribution of the maximum size of two of the three canonical patterns contained in a random ordered matching. As a consequence, their values can be pinpointed very precisely.
\begin{theorem}[\cite{BaRa}]\label{St}
The sizes of the largest stack and the largest wave contained in $\rm_n$ are a.a.s.\ equal to~$(1+o(1))\sqrt{2n}$.
\end{theorem}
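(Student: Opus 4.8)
The plan is to translate each extremal quantity into a longest‑monotone‑subsequence problem and then import the sharp constant from the theory of the Plancherel measure.

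First I would fix the edge order $e_i=\{a_i,b_i\}$ by left ends $a_1<\cdots<a_n$ and read off the sequence of right ends $(b_1,\dots,b_n)$. Since a set of edges forms a stack precisely when it is pairwise nesting---equivalently its $b$-values decrease---the largest stack of $\rm_n$ is exactly the longest \emph{decreasing} subsequence of $(b_1,\dots,b_n)$. Likewise a set of edges forms a wave precisely when its $b$-values increase and, additionally, the last opener precedes the first closer, so the largest wave is a (mildly constrained) longest \emph{increasing} subsequence of the same sequence. The conceptually clean way to treat both, and to see that they ought to share one limit, is the RSK-type bijection of Chen--Deng--Du--Stanley--Yan between matchings of $[2n]$ and up-down (oscillating) tableaux of length $2n$ from $\emptyset$ to $\emptyset$: under it the largest nesting and the largest crossing become the maximal number of rows and of columns, respectively, among the Young diagrams visited, and conjugating every diagram interchanges the two.

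For the upper bound I would start with a first-moment estimate, which already gives the right order. On any fixed set of $2k$ points there is a unique nesting pattern and a unique crossing pattern, and the chance that $k$ prescribed disjoint pairs all belong to $\rm_n$ is $\alpha_{n-k}/\alpha_n$; hence the expected numbers of stacks and of waves of size $k$ are \emph{equal}, both being $\binom{n}{k}\,2^{k}\big/\!\big(k!\binom{2k}{k}\big)$. This expectation tends to $0$ as soon as $k\ge(1+\varepsilon)e\sqrt{n/2}$, so a.a.s.\ no stack or wave is larger than that. This only yields the constant $e/\sqrt2$, whereas the target is $\sqrt2$; to close the gap I would feed the tableau correspondence above into the Vershik--Kerov--Logan--Shepp limit-shape theorem for Plancherel-type growth, which pins the maximal row and column lengths at $(1+o(1))\sqrt{2n}$ and, crucially, also supplies the matching lower bound $\ge(1-o(1))\sqrt{2n}$ in expectation.

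Finally, to pass from expectation to an a.a.s.\ statement I would realize $\rm_n$ through the permutation-based scheme, chopping a uniform permutation $\Pi$ of $[2n]$ into consecutive pairs, and observe that the largest stack and largest wave are $1$-Lipschitz, certifiable functions of $\Pi$: transposing two entries of $\Pi$ touches at most two edges and hence moves either statistic by $O(1)$, while a structure of size $k$ is certified by its $2k$ points. Talagrand's concentration inequality for functions of random permutations then confines each statistic to an interval of width $O(\sqrt{\text{mean}})=O(n^{1/4})=o(\sqrt n)$ around its mean, which upgrades the expectation asymptotics to the claimed a.a.s.\ equality. The hard part is neither the first moment nor the concentration but the sharp constant: the second-moment method is well known to miss the right constant for longest-increasing-subsequence problems, so the RSK/Plancherel input is indispensable, and the only matching-specific subtlety is the asymmetry between the clean decreasing subsequence governing stacks and the constrained increasing subsequence governing waves, which the oscillating-tableau symmetry reconciles. (Alternatively, citing the Tracy--Widom fluctuation results of Baik--Deift--Johansson and Baik--Rains delivers both the constant and the $o(\sqrt n)$ concentration simultaneously.)
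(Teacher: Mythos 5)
The first thing to note is that the paper does not prove Theorem~\ref{St} at all: it is imported verbatim from Stanley's survey \cite{Stanley}, so there is no internal proof to compare your attempt against. What the paper proves on its own is deliberately weaker and elementary: the first-moment upper bound $(1+o(1))e\sqrt{n/2}$ of Proposition~\ref{upb} (your second paragraph is literally this computation, including the observation that stacks and waves of size $k$ have equal expected counts $\binom{2n}{2k}\alpha_{n-k}/\alpha_n$), and the lower bound $\tfrac{1-o(1)}{e\sqrt 2}\sqrt{n}$ of Theorem~\ref{lowerLSW}, obtained by restricting $\rm_n$ to the edges joining $[n]$ to $[2n]\setminus[n]$ (which destroys all nontrivial lines) and applying the Erd\H{o}s--Szekeres-type Theorem~\ref{Theorem E-S for LSW}. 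Your proposal is therefore a reconstruction of the external proof behind Stanley's citation, and in outline it identifies the right ingredients: the translation of stacks/waves into monotone-subsequence statistics, the crossing--nesting symmetry, and sharp asymptotics imported from integrable probability. No elementary argument for the constant $\sqrt 2$ is known, so leaning on deep external results is unavoidable here.

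There is, however, one genuine gap in your central step as written. You cannot simply ``feed'' the oscillating-tableau correspondence into the Vershik--Kerov--Logan--Shepp theorem: the law of the Young diagram visited at step $j$ of the walk induced by a uniform matching is \emph{not} the Plancherel measure, and the maximum of row (or column) lengths over the whole walk is not controlled by any single time-marginal. As stated this step is a heuristic --- a correct one, since the mid-walk shape typically has about $n/2$ boxes and $2\sqrt{n/2}=\sqrt{2n}$ --- but not a proof. The rigorous route, and the one Stanley's Theorem 17 actually rests on, is the one you relegate to your closing parenthesis: view $\rm_n$ as a random fixed-point-free involution of $[2n]$; a stack of size $k$ is exactly a decreasing subsequence of length $2k$ of that involution (the maximal nesting equals half the first-column length of its RSK shape), so the Baik--Rains theorem on monotone subsequences of random fixed-point-free involutions gives that the largest stack is a.a.s.\ $(1+o(1))\sqrt{2n}$, and the Chen--Deng--Du--Stanley--Yan symmetry of the joint distribution of crossings and nestings transfers the same asymptotics to waves. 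Once Baik--Rains is invoked, your Talagrand step becomes redundant, since distributional convergence already yields the a.a.s.\ statement; note also that if one did want a separate concentration step, it must indeed be Talagrand's certifiable version and not an Azuma-type bound such as the paper's Theorem~\ref{azuma}, because the latter only localizes the statistic in a window of width $\Theta(\sqrt{n}\,)$, which is not $o(\sqrt{n})$ when the mean itself is of order $\sqrt{n}$.
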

A similar result for lines was proved by Justicz, Scheinerman, and Winkler in \cite{JSW}. Note, however, that the constant is different.

\begin{theorem}[\cite{JSW}]\label{lines}
The size of the largest line contained in $\rm_n$ is a.a.s.\ equal to $(2+o(1))\sqrt{n/\pi}$.
\end{theorem}

In this section we provide  simpler, purely combinatorial proofs of  weaker versions of  Theorems \ref{St} and \ref{lines}, with the asymptotic coefficient, resp., $\sqrt2$ and $2/\sqrt\pi$ replaced by  pairs of constants setting  lower and upper bounds only.
 The proof of the  upper bounds is quite straightforward and provides a more general result.

\begin{proposition}\label{upb}
Let $(M_k)_1^\infty$ be a sequence of ordered matchings of size $k$, $k=1,2,\dots$. Then, a.a.s.
$$\max\{k:\; M_k\preceq \rm_n\}\le (1+o(1))e\sqrt{  n/2}.$$
\end{proposition}
\begin{proof}
Set $k_0=\lfloor \left(1+n^{-1/3}\right)e\sqrt{ n/2}\rfloor$, and let $X_k$ be a random variable counting the number of ordered copies of $M_k$ in $\rm_n$. Our goal is to show, via the first moment method, that a.a.s.\ $X_k=0$ for all $k\ge k_0$. Note that it is \emph{not} enough to show that a.a.s.\ $X_{k_0}=0$, as the sequence $(M_k)_1^\infty$ may not be ascending.

To compute the expectation of $X_k$, one has to first choose the $2k$ vertices of a copy of $M_k$ (the copy itself is  placed in just one way), then count the number of extensions of that copy to an entire matching of size $n$, and, finally, divide  by the total number of matchings. Thus,
\[
\E X_k = \binom{2n}{2k} \cdot 1 \cdot \frac{\alpha_{n-k}}{\alpha_n}
= \frac{2^k}{(2k)!} \cdot \frac{n!}{(n-k)!}
\le \frac{2^k}{(2k)!} \cdot n^k
\le \frac{2^k}{(2k/e)^{2k}} \cdot n^k
= \left( \frac{e^2n}{2k^2} \right)^k,
\]
and, by the union bound applied together with Markov's inequality,
\begin{align*}
\Pr(\exists k\ge k_0:\; X_k>0) &\le\sum_{k=k_0}^{n}\Pr(X_k\ge1)\\
&\le\sum_{k=k_0}^{n}\E X_k\le \sum_{k=k_0}^{n}\left( \frac{e^2n}{2k^2} \right)^k\le n(1+n^{-1/3})^{-2k_0}=o(1).
\end{align*}
\end{proof}

Owing to the very homogeneous structure of lines, stacks, and waves, we are able to establish  corresponding  lower bounds for their maximum sizes in $\rm_n$. It is, perhaps, interesting to note that, unlike for permutations, the size of the sub-structures guaranteed by the of Erd\H os--Szekeres-type result (cf. Corollary \ref{ESequal}) grows substantially in the random setting.

\begin{theorem}\label{lowerLSW}
A random matching $\rm_{n}$ contains a.a.s.\ stacks and waves of size at least
$\tfrac{1-o(1)}{e\sqrt 2}\sqrt{n}$ each, as well as lines of size at least $\tfrac18\sqrt n$.
\end{theorem}

The proof for stacks and waves is very simple and relies mostly on Theorem \ref{Theorem E-S for LSW}.
For lines we will  make use of the following lemma which might be of some independent interest. For that reason we state it in a more general setting than what we actually need.
The \emph{length} of an edge $\{i,j\}$ in a matching on $[2n]$ is defined as $|j-i|$.

\begin{lemma}\label{lemma:edges}
Let a sequence $f(n)$ be such that $f(n)\to\infty$ and $f(n)=o(n)$. %$1\ll k \ll n$.
Then, a.a.s.~the number of edges of length at most $f(n)$ in $\rm_{n}$ is $(1+o(1))f(n)$.
\end{lemma}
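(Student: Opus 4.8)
The plan is to use the second moment method. Let $X=X(n)$ be the number of edges of $\rm_n$ of length at most $m$, and for each pair $\{i,j\}\subseteq[2n]$ with $1\le j-i\le m$ let $X_{ij}$ be the indicator of the event that $\{i,j\}$ is an edge of $\rm_n$, so that $X=\sum_{ij}X_{ij}$. First I would record the two elementary facts about a uniform random matching that drive everything: by symmetry a fixed pair is an edge with probability $1/(2n-1)$, and two fixed \emph{disjoint} pairs are simultaneously edges with probability $1/\big((2n-1)(2n-3)\big)$. Since the number of pairs with $j-i=\ell$ is exactly $2n-\ell$, the number of ``short'' pairs equals $\sum_{\ell=1}^{m}(2n-\ell)=2nm-\binom{m+1}{2}$, and linearity of expectation gives
\[
\E X=\frac{2nm-\binom{m+1}{2}}{2n-1}=m(1+o(1)),
\]
where the correction term has order $m^2/n=o(m)$ because $m=o(n)$, and the factor $2n/(2n-1)$ is $1+o(1)$.

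Next I would bound the variance by splitting $\mathrm{Var}(X)=\sum_{ij,kl}\mathrm{Cov}(X_{ij},X_{kl})$ into three types of contributions. The diagonal terms give $\sum_{ij}\mathrm{Var}(X_{ij})\le\E X=m(1+o(1))$. For two distinct short pairs sharing a vertex the product $X_{ij}X_{kl}$ vanishes identically, so their covariance is negative and may be discarded in an upper bound. For two distinct \emph{disjoint} short pairs the covariance equals $\frac{1}{(2n-1)(2n-3)}-\frac{1}{(2n-1)^2}=O(n^{-3})$; as there are at most $(2nm)^2$ ordered such pairs, their total contribution is $O\big(n^{2}m^{2}\cdot n^{-3}\big)=O(m^2/n)=o(m)$. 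Hence $\mathrm{Var}(X)=O(m)$.

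Finally, Chebyshev's inequality gives, for any fixed $\varepsilon>0$,
\[
\Pr\big(|X-\E X|>\varepsilon\,\E X\big)\le\frac{\mathrm{Var}(X)}{\varepsilon^{2}(\E X)^{2}}=O\!\left(\frac{1}{\varepsilon^{2}m}\right)\longrightarrow0
\]
as $m\to\infty$; combined with $\E X=m(1+o(1))$ this yields $X=m(1+o(1))$ a.a.s., as claimed. The one step requiring genuine care is the variance bound: a priori the roughly $(nm)^2$ disjoint pairs could swamp the estimate, and the argument works only because each of their covariances is as small as $O(n^{-3})$, so that their aggregate is $O(m^2/n)$ and hence $o(m)$ \emph{precisely} when $m=o(n)$. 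Thus the hypothesis $m=o(n)$ is what controls the correlations, while $m\to\infty$ is what forces the relative fluctuations to vanish.
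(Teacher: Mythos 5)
Your proof is correct and follows essentially the same route as the paper: the same indicators $X_{uv}$, the same count $2nm-\binom{m+1}{2}$ of short pairs giving $\E X=m(1+o(1))$, the same joint-edge probability $1/\bigl((2n-1)(2n-3)\bigr)$, and Chebyshev's inequality to conclude. The only difference is bookkeeping — you split $\mathrm{Var}(X)$ into diagonal, intersecting, and disjoint covariance terms, while the paper bounds $\E(X(X-1))$ directly — and both yield the same $o(m^2)$ variance control.
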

\begin{proof}
For each pair of vertices $1\le u<v\le 2n$, let $X_{uv}$ be the indicator random variable  equal to one if $\{u,v\}\in\rm_{n}$ and 0 otherwise. Clearly, $\Pr(X_{uv}=1)=1/(2n-1)$. Let us write for simplicity $m=f(n)$. The sum $X = \sum_{1\le v-u \le m} X_{uv}$ counts all edges in $\rm_n$ of length at most $m$.
As the number of summands is $(2n-1)+(2n-2)+\dots+(2n-m)= 2nm-\binom{m+1}2$,  we have
\[
\E X = \frac{2nm-\binom{m+1}2}{2n-1} =m(1-O(m/n)).
\]
In particular, by the assumptions on $f(n)$, we get $\E X=m(1-o(1))\to\infty$. To estimate the second moment, observe that
\[
\Pr(X_{u_1v_1}=X_{u_2v_2}=1) =
\begin{cases}
\frac{1}{(2n-1)(2n-3)}, & \text{ if } \{u_1,v_1\}\cap \{u_2,v_2\} = \emptyset,\\
0, & \text { otherwise}.
\end{cases}
\]
Hence, estimating quite crudely,
\[
\E(X(X-1)) \le \frac{\left(2nm-\binom{m+1}2\right)^2}{(2n-1)(2n-3)},
\]
and consequently, by Chebyshev's inequality
\begin{align*}&\Pr(|X-\E X|\ge\eps\E X)\le \frac{\E (X(X-1))+\E X-(\E X)^2}{\eps^2(\E X)^2}\\&\qquad\qquad\le\frac1{\eps^2}\left(\frac{(2n-1)^2}{(2n-1)(2n-3)}+\frac1{\E X}-1\right)=\frac1{\eps^2}\left(\frac2{2n-3}+\frac1{\E X}\right)\to0,
\end{align*}
provided %$\eps\gg k^{-1/2}$.
that $\eps^2 m \to\infty$.
This implies that a.a.s.\ $1-\eps-m/(2n)<X/m\le 1+\eps$.
\end{proof}

\medskip

\begin{proof}[Proof of Theorem \ref{lowerLSW}] We split the proof into two uneven parts.

\textbf{Stacks and waves:} Fix $\eps>0$ and choose $\eps'>0$ such that $\tfrac{1-2\eps'}{1+\eps'}=1-\eps$. Let $M$ be the sub-matching of $\rm_n$ consisting of all edges with one end in $[n]$ and the other in $[2n]\setminus[n]$. Set $X:=|M|$. We have $\E X=n^2/(2n-1)\sim n/2$ and
\[
\E (X(X-1))=\frac{n^2(n-1)^2}{(2n-1)(2n-3)}\sim (\E X)^2,
\]
so, using Chebyshev's inequality, it follows that  a.a.s.\ $|X-n/2|=o(n)$.

 Observe that there are no lines in $M$ longer than 1. By Proposition~\ref{upb}, a.a.s.\ there are no stacks or waves in $\rm_n$ of size  $k_0=\lfloor (1+o(1)) e\sqrt{ n/2}\rfloor$. Hence, by applying Theorem~\ref{Theorem E-S for LSW} to $M$  with $\ell=1$, $s=k_0-1$, and $w=\lfloor\tfrac{X-1}{k_0-1}\rfloor$, we conclude that a.a.s.\ there is a wave in $M$, and thus in $\rm_n$, of size at least
 $$w+1=\frac{1-o(1)}{e\sqrt 2}\sqrt{n}.$$
By swapping the roles of waves and stacks in the above argument, we deduce that a.a.s.\ there is also a stack in~$M$, and thus in $\rm_n$, of size  $\frac{1-o(1)}{e\sqrt 2}\sqrt{n}$.

\textbf{Lines:}
Let $m = \lfloor\sqrt{n}/2\rfloor$. By Lemma~\ref{lemma:edges}, a.a.s.~the number of edges of length at most~$m$ in $\rm_{n}$ is at least $\sqrt{n}/4$. We will show that among the edges of length at most~$k$, there are a.a.s.\ at most $\sqrt{n}/8$ pairs forming crossings or nestings. After removing one edge from each crossing and nesting we obtain a line of size at least $\sqrt{n}/4 - \sqrt{n}/8 = \sqrt{n}/8$.

For a 4-element subset $S=\{u_1,u_2,v_1,v_2\}\subset [2n]$ with $u_1<v_1<u_2<v_2$, let $X_S$ be an indicator random variable equal to $1$ if both $\{u_1,u_2\}\in \rm_n$ and $\{v_1,v_2\}\in \rm_n$, that is, if $S$ spans a crossing in $\rm_n$. Clearly,
\[
\Pr(X_S=1) = \frac{1}{(2n-1)(2n-3)}.
\]

Let $X = \sum X_S$, where the summation is taken over all sets $S$ as above and such that $u_2-u_1\le m$ and $v_2-v_1\le m$.
Note that this implies that $v_1-u_1\le m-1$. Let $f(n,m)$ denote the number of terms in this sum.
 We have
$$f(n,m)\le\left(2n(m-1)-\binom{m}2\right)\binom m2\le\left(nm-\frac12\binom{m}2\right)m^2,$$
as we have at most $2n(m-1)-\binom{m}2$ choices for $u_1$ and $v_1$ (see the proof of Lemma \ref{lemma:edges} with $m$ replaced by $m-1$) and, once $u_1, v_1$ have been selected, at most $\binom m2$ choices  of $u_2$, and $v_2$.
It is easy to see that $f(n,m)=\Omega(nm^3)$. (In fact, one could show that $f(n,m)\sim \tfrac23nm^3$, but we do not care about optimal constants here.) Hence,  $\E X = \Omega(m^3/n)\to\infty$, while
$$\E X =\sum_S \E X_S=\frac{f(n,m)}{(2n-1)(2n-3)} \le \frac{m^3}{4n}= \frac1{32}\sqrt{n}.$$

To apply Chebyshev's inequality, we need to estimate $\E (X(X-1))$, which can be written as
\[
\E(X(X-1))=\sum_{S,S'}\Pr\left(\{\{u_1,v_1\},\{u_2,v_2\},\{u_1',v_1'\},\{u_2',v_2'\}\}\subset\rm_n\right),
\]
where the summation is taken over all (ordered) pairs of sets $S=\{u_1,u_2,v_1,v_2\}\subset [2n]$ with $u_1<v_1<u_2<v_2$ and $S'=\{u'_1,u'_2,v'_1,v'_2\}\subset [2n]$ with $u'_1<v'_1<u'_2<v'_2$  such that $u_2-u_1\le m$, $v_2-v_1\le m$, $u'_2-u'_1\le m$, and $v'_2-v'_1\le m$.
 We split the above sum into two sub-sums $\Sigma_1$ and $\Sigma_2$ according to whether $S\cap S'=\emptyset$ or $|S\cap S'|=2$ (for all other options the above probability is zero). In the former case,
 $$\Sigma_1\le\frac{f(n,m)^2}{(2n-1)(2n-3)(2n-5)(2n-7)}=(\E X)^2(1+O(1/n)).$$
 In the latter case, the number of such pairs $(S,S')$ is at most $f(n,m)\cdot 4m^2$, as given $S$, there are four ways to select the common pair and at most $m^2$ ways to select the remaining two vertices of $S'$. Thus,
 $$\Sigma_2\le \frac{f(n,m)\cdot 4m^2}{(2n-1)(2n-3)(2n-5)}=O(m^5/n^2)=O(\sqrt n)$$
 and, altogether,

\[
\E (X(X-1)) \le (\E X)^2(1+O(1/n)) + O(\sqrt n)=(\E X)^2 + O(\sqrt n).
\]
By Chebyshev's inequality,
\begin{align*}\Pr(|X-\E X|\ge\E X)&\le \frac{\E(X(X-1))+\E X-(\E X)^2}{(\E X)^2}\\&\le1+O(1/\sqrt n)+\frac1{\E X}-1=O(1/\sqrt n)\to0.
\end{align*}
 Thus, a.a.s.\ $X\le 2\E X\le \sqrt n/16$.

We deal with nestings in a similar way. For a 4-element subset $S=\{u_1,u_2,v_1,v_2\}\subset [2n]$ with $u_1<v_1<v_2<u_2$, let $Y_S$ be an indicator random variable equal to $1$ if both $\{u_1,u_2\}\in \rm_n$ and $\{v_1,v_2\}\in \rm_n$, that is, if $S$ spans a nesting in $\rm_n$.
 Further, let $Y = \sum Y_{S}$, where the summation is taken over all sets $S$ as above and such that $u_2-u_1\le m$ and (consequently) $v_2-v_1\le m-2$.
It is crucial to observe that, again, $\E Y \le m^3/n = \sqrt{n}/32$.
Indeed, this time there are  at most $2nm-\binom{m+1}2$ choices for $u_1$ and $u_2$ and, once $u_1, u_2$ have been selected, at most $\binom {m-2}2$ choices  of $v_1$, and $v_2$, while the probability of both pairs appearing in $\rm_n$ remains the same as before.
The remainder of the proof goes mutatis mutandis.

We conclude that a.a.s.~the number of crossings and nestings of length at most $m$ in $\rm_n$ is at most  $ \sqrt{n}/8$ as was required.
\end{proof}

We close this section with a straightforward generalization of Proposition~\ref{upb} to
 random $r$-uniform ordered matchings, $r\ge2$. Let $\rm^{(r)}_{n}$ be a random  (ordered) $r$-matching of size $n$, that is, a matching picked uniformly at random out of the set of all $\alpha^{(r)}_n:=(rn)!/(n!(r!)^n)$ matchings on the set  $[rn]$.
 \begin{proposition}\label{upbr}
Let $(M^{(r)}_k)_1^\infty$ be a sequence of ordered $r$-matchings of size $k$, $k=1,2,\dots$. Then, a.a.s.
\[
\max\{k:\; M^{(r)}_k\preceq \rm^{(r)}_n\}\le (1+o(1))\frac{e}{r}(r!n)^{1/r}.
\]
\end{proposition}

\proof Let $k_0=\lfloor (1+n^{-1/(r+1)})\frac{e}{r}(r!n)^{1/r}\rfloor$ and, for each $k\ge k_0$, let $X^{(r)}_k$ be the number of ordered copies of $M^{(r)}_k$ of size $k$ in $\rm^{(r)}_n$. Then
\[
\E X^{(r)}_k = \binom{rn}{rk} \cdot 1 \cdot \frac{\alpha^{(r)}_{n-k}}{\alpha^{(r)}_n}
= \frac{(r!)^k}{(rk)!} \cdot \frac{n!}{(n-k)!}
\le \frac{(r!)^k}{(rk)!} \cdot n^k
\le \frac{(r!)^k}{(rk/e)^{rk}} \cdot n^k
= \left( \frac{e^r r! n}{(rk)^r} \right)^k,
\]
which implies that
\[
\Pr(\exists k\ge k_0:\; X^{(r)}_k>0)
\le \sum_{k=k_0}^{n} \left( \frac{e^r r! n}{(rk)^r} \right)^k
\le n(1+n^{-1/(r+1)})^{-rk_0} = o(1).
\]
\qed

In particular, the above statement yields that in $\rm^{(3)}_n$ a.a.s.\ none of the nine collectable sub-matchings (as defined in Subsection~\ref{section:hyper}) has  size bigger than~$2n^{1/3}$. It is, at the moment, an open problem to match it  with a fair lower bound (see discussion in Section \ref{fr}).

\section{Twins}

Recall that for $r\ge2$, by \emph{$r$-twins} in a matching $M$ we mean any collection of disjoint, pairwise  order-isomorphic sub-matchings  $M_1,M_2,\dots,M_r$. For instance, the matching $M=AABCDDEBCFGHIHEGFI$ contains $3$-twins formed by the triple $M_1=\color{blue}{BDDB}$, $M_2=\color{Red}{EHHE}$, and $M_3=\color{ForestGreen}{FGGF}$ (see Figure \ref{Matchings7}).

 Recall also that by the \emph{size} of $r$-twins we mean the size (the number of edges) in just one of them. Let $t_r(M)$  denote the maximum size of $r$-twins in a matching $M$ and $t_r^{\match}(n)$ -- the minimum of $t_r(M)$ over all matchings~$M$ on $[2n]$.

\begin{figure}[ht]
	
	\begin{center}
		
		\resizebox{15cm}{!}{
			
			\includegraphics{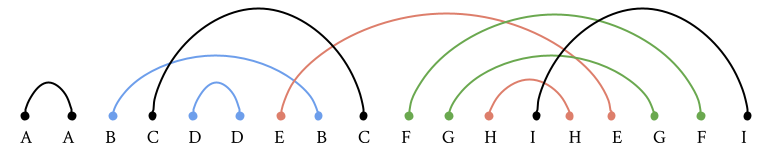}
			
		}
		
		\caption{3-twins of size two.}
		\label{Matchings7}
	\end{center}
	
\end{figure}

 \subsection{In arbitrary matchings}\label{twins_am}
 Let us first point to a direct connection between twins in permutations and ordered twins in a certain kind of matchings.
By a \emph{permutation} we mean any finite sequence of pairwise distinct positive integers. We say that two permutations $(x_1,\dots,x_k)$ and $(y_1,\dots,y_k)$ are \emph{similar} if their entries preserve the same relative order, that is, $x_i<x_j$ if and only if $y_i<y_j$ for all $1\leqslant i<j\leqslant k$. %Let $\pi$ be a permutation of $[n]$
 For $r\ge2$, any $r$ pairwise similar and disjoint sub-permutations of a permutation $\pi$ are called \emph{$r$-twins}. For example, in permutation $$(6,\colorbox{cyan}{1},\colorbox{cyan}{4},7,\colorbox{Lavender}{3},9,\colorbox{Lavender}{8},\colorbox{cyan}{2},\colorbox{Lavender}{5}),$$ the red and blue subsequences form a pair of twins, or 2-twins, of length $3$, both similar to the permutation $(1,3,2)$.

 Let $t_r(\pi)$ denote the maximum length of  $r$-twins in a permutation $\pi$ and
  $t_r^{\perm}(n)$ -- the minimum of $t_r(\pi)$ over all permutations~$\pi$ of~$[n]$.
  Gawron \cite{Gawron} proved that $t_2^{\perm}(n)\leqslant cn^{2/3}$ for some constant $c>0$. This was easily generalized to $t_r^{\perm}(n)\leqslant c_rn^{r/(2r-1)}$ (see \cite{DGR}).

  As for a lower bound, notice that by the Erd\H{o}s--Szekeres Theorem, we have $t_r^{\perm}(n)\geqslant \left\lfloor \frac{1}{r}n^{1/2}\right\rfloor$.
   For $r=2$, this bound was substantially improved by Bukh and Rudenko \cite{BukhR}.

 \begin{theorem}[Bukh and Rudenko \cite{BukhR}]\label{Lemma BR} For all $n$,
 	 $t_2^{\perm}(n)\geqslant\frac{1}{8}n^{3/5}$.
 \end{theorem}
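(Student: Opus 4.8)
The plan is to reduce the problem to finding two disjoint, order-isomorphic subsequences and then to organise the search on two scales. Writing $\pi$ as a set of $n$ points $(i,\pi(i))$ in the grid, observe first that any two disjoint increasing subsequences of equal length $\ell$ automatically form $2$-twins (both are order-isomorphic to the monotone pattern $1,2,\dots,\ell$), and likewise for decreasing subsequences. In particular, a single monotone subsequence of length $2\ell$, split into its first and second halves, yields twins of size $\ell$; combined with Erd\H os--Szekeres this already gives the baseline $t_2^{\perm}(n)\ge \tfrac12\sqrt n$ recorded above. The whole difficulty is to beat this square-root barrier, which is only possible by producing genuinely \emph{non-monotone} twins.

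To go further I would decompose on two scales. First partition the positions $[n]$ into $b$ consecutive blocks of size $n/b$; inside each block Erd\H os--Szekeres produces a monotone run of length at least $\sqrt{n/b}$, and by pigeonhole at least $b/2$ of these runs point in the same direction, say increasing. This leaves a left-to-right sequence $R_1,\dots,R_s$ ($s\ge b/2$) of pairwise disjoint increasing runs, each of length $p\ge\sqrt{n/b}$ and each occupying some interval $J_i$ of heights. The key point is that any sub-collection of these runs whose height-intervals are pairwise disjoint glues into a single pattern which is the \emph{inflation} of the height-arrangement of the chosen runs by increasing blocks of length $p$. Hence two \emph{disjoint} sub-collections of runs, of equal size $t$ and realising the \emph{same} height-arrangement, concatenate into order-isomorphic --- and in general non-monotone --- twins of size about $tp$.

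The task thus becomes a second, meta-level search: among the $s$ interval-labelled runs, find two disjoint sub-collections of maximal common size that realise the same relative arrangement. Balancing the two scales is where the exponent is decided, and it is also the main obstacle. A naive second application of Erd\H os--Szekeres at the meta-level (forcing the runs into a monotone height-arrangement and halving) only recovers twins of size $\approx\tfrac12\sqrt{s}\,p=\Theta(\sqrt n)$, and even a recursive version of the same idea stalls at the exponent $1/2$, because a plain product of the two scales loses exactly one square root. Extracting the extra factor $n^{1/10}$ that upgrades $n^{1/2}$ to $n^{3/5}$ is precisely the content of Bukh and Rudenko's argument \cite{BukhR}: one must analyse the intra-run monotone structure and the inter-run arrangement \emph{simultaneously}, carefully controlling the overlaps of the height-intervals so that the runs combine more efficiently than a single meta-level Erd\H os--Szekeres permits, and then track the constants through the optimisation to land at the stated bound $\tfrac18 n^{3/5}$. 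I therefore expect the construction and analysis of this refined two-scale combination --- rather than the easy monotone case --- to be the crux of the proof.
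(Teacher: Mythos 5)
There is a genuine gap here, and you in effect name it yourself. What you actually establish is only the easy baseline: disjoint halves of a monotone subsequence are twins, so Erd\H{o}s--Szekeres gives $t_2^{\perm}(n)\ge\tfrac12\sqrt{n}$. Your two-scale scheme (monotone runs inside position-blocks, then a meta-level search for two disjoint sub-collections of runs realising the same height-arrangement) is, by your own admission, analysed only to the point of showing it again stalls at $\Theta(\sqrt n)$. The passage from exponent $1/2$ to exponent $3/5$ with the constant $\tfrac18$ --- which is the \emph{entire} content of the theorem --- is then attributed to ``Bukh and Rudenko's argument \cite{BukhR}'' without being reproduced. Invoking the source of the theorem to close the one gap that matters makes the attempt circular: as a proof, nothing beyond the square-root bound has been demonstrated.

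Two further points for calibration. First, the paper you are reading does not prove this statement either: it imports it verbatim from \cite{BukhR} and uses it purely as a black box, feeding it through Proposition \ref{general} and Lemma \ref{polyn} to obtain the matching bound $t_2^{\match}(n)\ge\tfrac18\left(\tfrac n4\right)^{3/5}$; so there is no internal proof to compare against, and a blind attempt must supply the Bukh--Rudenko argument itself. Second, your closing guess about what that argument looks like --- a more careful simultaneous analysis of intra-run monotone structure and inter-run arrangement --- is not a reliable roadmap: since (as you note) every run-based gluing of the two scales loses a square root, the actual proof cannot be a refinement of your scheme and must introduce a genuinely different mechanism for building non-monotone twins. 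Supplying that mechanism, with the optimisation that produces $3/5$ and $\tfrac18$, is precisely what is missing from your proposal.
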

 Using their approach, in \cite{DGR1}, we generalized this bound to arbitrary $r\ge3$.
 \begin{theorem}[\cite{DGR1}]\label{Lemma BR-gen} For all $n$ and $r\ge3$,
 	 $t_r^{\perm}(n)\geqslant\frac{1}{3r}n^{\frac R{2R-1}}$, where $R=\binom{2r-1}r$.
 \end{theorem}

 We call an ordered matching $M$ on the set $[2n]$ \emph{permutational} if the left end of each edge of $M$ lies in the set $[n]$ (and so, the right end of each edge lies in $[n+1,2n]$. In the double occurrence word notation such a matching can be written as $M=A_1A_2\ldots A_nA_{i_1}A_{i_2}\ldots A_{i_n}$, where $\pi_M = (i_1,i_2,\ldots, i_n)$ is a permutation of $[n]$. There are only $n!$ permutational matchings, nevertheless this connection to permutations turned out to be quite fruitful. Indeed, it is not hard to see that ordered $r$-twins in a permutational matching $M$ are in one-to-one correspondence with $r$-twins in $\pi_M$. In particular, we have $t_r(M)=t_r(\pi_M)$ for such matchings and, consequently, $t_r^{\match}(n)\le t_r^{\perm}(n)$. In particular, by the above mentioned result of Gawron,  $t_2^{\match}(n)=O(n^{2/3})$.

 More subtle is the opposite relation.
 \begin{proposition}\label{general}
 For all $r\ge2$ and $1\le m\le n$, where $n-m$ is even,
 $$t_r^{\match}(n)\ge \min\left\{t_r^{\perm}(m),\; 2t_r^{\match}\left(\frac{n-m+2}2\right)\right\}.$$
 \end{proposition}
 \begin{proof} Let $M$ be a matching on $[2n]$.
  Split the set of vertices of $M$ into two halves, $A=[n]$ and $B=[n+1,2n]$ and let $M':=M(A,B)$ denote the set of edges of $M$ with one end in $A$ and the other end in $B$. Note that $M'$ is a permutational matching. We distinguish two cases. If $|M'|\geqslant m$, then
  $$t_r(M)\ge t_r(M')=t_r(\pi_{M'})\ge t_r^{\perm}(|M'|)\ge t_r^{\perm}(m).$$
  If, on the other hand, $|M'| < m$, that means, $|M'|\le m-2$, due to the assumption of $n-m$ being even,
  then we have sub-matchings $M_A$ and $M_B$ of $M$ of size at least $(n-m+2)/2$ in  sets, respectively, $A$ and $B$. Thus, in this case, by concatenation,
  $$t_r(M)\ge t_r(M_A)+t_r(M_B)\ge 2t_r^{\match}\left(\frac{n-m+2}2\right).$$
 \end{proof}

Proposition \ref{general} allows, under some mild conditions, to ,,carry over'' any lower bound on $t_r^{\perm}(n)$ to one on $t_r^{\match}(n)$. In view of Theorems \ref{Lemma BR} and \ref{Lemma BR-gen}, as well as the upper bound on  $t_r^{\perm}(n)$ from \cite{DGR} mentioned above, we may assume that the parameter $\alpha$ introduced in the lemma below falls between $R/(2R-1)$ and $r/(2r-1)$.

 \begin{lemma}\label{polyn} For all $r\ge2$, $R/(2R-1)\le\alpha\le  r/(2r-1)$, and $\beta>0$, if $t_r^{\perm}(n)\ge \beta n^{\alpha}$ for all $n\ge r$, then $t_r^{\match}(n)\ge \beta( n/4)^\alpha$ for  all $n\ge r$.
 \end{lemma}
\begin{proof} Assume that for some $r\ge2$ and $\alpha,\beta$ as above,  $t_r^{\perm}(n)\ge \beta n^{\alpha}$, for all $n\ge r$. We will prove that $t_r^{\match}(n)\ge \beta(n/4)^\alpha$  by induction on $n$. For $n\leqslant 4\left(\tfrac 1\beta\right)^{1/\alpha}$ the claimed bound is at most $1$, so it is trivially true. Assume then that $n\ge \left(\tfrac 1\beta\right)^{1/\alpha}$ and that $t_r^{\match}(n')\ge \beta(\gamma n')^\alpha$ for all $r\le n'<n$.
Let $n'\in\{\lceil n/4\rceil, \lceil n/4\rceil+1\}$ have the same parity as $n$.  Then, by Proposition \ref{general} with $m=n'$,
$$t_r^{\match}(n)\ge \min\left\{t_r^{\perm}(n'), 2t_r^{\match}\left(\frac{n-n'+2}2\right)\right\}.$$
Now, by the assumption of the lemma applied to $n=r$, noticing that $t_r^{\match}(r)=1$, we have that $1\ge\beta r^\alpha$, or $\beta\le r^{-\alpha}$, so $n'\ge n/4\ge r$.
Thus, by the assumption of the lemma applied to $n'$, $t_r^{\perm}(n')\ge \beta(n/4)^\alpha$.
Further, noticing that $n-n'+2\ge 3n/4$,
again by the induction assumption, we also have
\begin{align*}2t_r^{\match}\left(\frac{n-n'+2}2\right)\ge2\beta\left(\frac14\cdot\frac38n\right)^\alpha\ge\beta(n/4)^\alpha,
\end{align*}
where the last inequality is equivalent to $3\ge2^{3-1/\alpha}$ which, in turn, follows by estimating the R-H-S by $2^{3-(2r-1)/r}\le2^{3-3/2}=2^{3/2}$.
\end{proof}

In particular,  Theorem \ref{Lemma BR} and Lemma \ref{polyn} with $\beta=1/8$, $\alpha=3/5$, and $\gamma=1/4$ imply immediately the following result.

\begin{corollary}
	For every $n$,  $t_2^{\match}(n)\ge\frac{1}{8}\left(\frac{n}{4}\right)^{3/5}$. \qed
\end{corollary}

Moreover, any future improvement of the bound in Theorem \ref{Lemma BR} would automatically yield a corresponding improvement of the lower bound on
 $t_2^{\match}(n)$.

\subsection{In random matchings}\label{twins_rm}

In this section we study the size of the largest $r$-twins in a random (ordered) matching $\rm_{n}$, which is, recall,  selected uniformly at random from all $\alpha_n:=(2n)!/(n!2^n)$ matchings on vertex set $[2n]$.
The first moment method yields that

\begin{equation}\label{up}
\mbox{a.a.s.}\quad t_r(\rm_n)<cn^{r/(2r-1)}\quad\mbox{for any}\quad c>e2^{-(r-1)/(2r-1)}.
 \end{equation}
 Indeed, the expected number of $r$-twins of size $k$ in $\rm_{n}$ is
\[
\frac1{r!}\binom{2n}{\underbrace{2k,\dots,2k}_{\text{\normalfont{$r$ times}}},2n-2kr}\frac{\alpha_k\cdot 1^{r-1}\cdot \alpha_{n-rk}}{\alpha_{n}}=\frac{2^{(r-1)k}n!}{r!(2k)!^{r-1}k!(n-rk)!}<\left(\frac{e^{2r-1}n^r}{2^{r-1}k^{2r-1}}\right)^k,
\]
where we used  inequalities $n! / (n-rk)! \le n^{rk}$, $(2k)! \ge (2k/e)^{2k}$, $k! \ge (k/e)^k$, and $r! \ge 1$.
Thus, it converges to 0, as $n\to\infty$,
provided $k\ge cn^{r/(2r-1)}$, for any $c$ as in~\eqref{up}.

It turns out that the a.a.s.~the lower bound on $t_r(\rm_n)$ is of the same order.

\begin{theorem}\label{rg1}
For every $r\ge2$, a.a.s.,
\[
t_r(\rm_n) = \Theta\left(n^{r/(2r-1)}\right).
\]
\end{theorem}

In the proof of the lower bound we are going to use the Azuma-Hoeffding inequality for random permutations (see, e.g., Lemma 11 in~\cite{FP} or  Section 3.2 in~\cite{McDiarmid98}). Let us recall that $\Pi_n$ stands for a \emph{random} permutation selected uniformly  from all $(2n)!$ permutations of the set $[2n]$.
\begin{theorem}\label{azuma}
 Let $h(\pi)$ be a function defined on the set of all permutations of order $2n$ such that, for some constant $c>0$, if a permutation $\pi_2$ is obtained from a permutation $\pi_1$ by swapping two elements, then $|h(\pi_1)-h(\pi_2)|\le c$.
Then, for every $\eta>0$,
\[
\PP(|h(\Pi_n)-\E[h(\Pi_n)]|\ge \eta)\le 2\exp(-\eta^2/(4c^2n)).
\]
\end{theorem}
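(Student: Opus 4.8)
The plan is to prove this by the standard Doob-martingale method: expose the random permutation $\Pi=\Pi_n$ of $[2n]$ one value at a time and then apply the Azuma--Hoeffding inequality for martingales with bounded increments. For $0\le i\le 2n$ let $\cF_i$ be the $\sigma$-field generated by $\Pi(1),\dots,\Pi(i)$ and put
\[
Z_i=\E\!\left[h(\Pi)\mid\cF_i\right],
\]
so that $(Z_i)_{i=0}^{2n}$ is a martingale with $Z_0=\E[h(\Pi)]$ and $Z_{2n}=h(\Pi)$ (in fact already $Z_{2n-1}=h(\Pi)$, since the last value of a permutation is forced by the first $2n-1$). Once we show that every increment satisfies $|Z_i-Z_{i-1}|\le c$, the Azuma--Hoeffding inequality for a martingale with at most $2n$ increments each bounded by $c$ yields
\[
\PP(|Z_{2n}-Z_0|\ge\eta)\le 2\exp\!\left(-\frac{\eta^2}{2\cdot 2n\,c^2}\right)=2\exp\!\left(-\frac{\eta^2}{4c^2n}\right),
\]
which is exactly the asserted bound.

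The crux --- and the one genuinely non-routine step --- is bounding the increments, where the difficulty is a mismatch between the filtration, which reveals \emph{the value in a single position}, and the Lipschitz hypothesis, which is stated in terms of \emph{swaps}. To deal with it, fix an outcome of $\cF_{i-1}$, say $\Pi(1)=x_1,\dots,\Pi(i-1)=x_{i-1}$, and let $S=[2n]\setminus\{x_1,\dots,x_{i-1}\}$ be the set of still-available values. Writing $g(a)=\E[h(\Pi)\mid\cF_{i-1},\,\Pi(i)=a]$ for $a\in S$, we have $Z_{i-1}=|S|^{-1}\sum_{b\in S}g(b)$ and $Z_i=g(\Pi(i))$, so
\[
|Z_i-Z_{i-1}|\le\max_{a,b\in S}|g(a)-g(b)|,
\]
and it suffices to prove $|g(a)-g(b)|\le c$ for all $a,b\in S$.

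I would establish this through a measure-preserving coupling of the two conditional laws. Conditioned on $\cF_{i-1}$ and $\Pi(i)=a$, the tail $(\Pi(i+1),\dots,\Pi(2n))$ is a uniformly random arrangement of $S\setminus\{a\}$; sample such a permutation $\pi_a$. In $\pi_a$ the value $b$ sits in some position $j>i$; let $\pi_b$ be obtained from $\pi_a$ by swapping the entries in positions $i$ and $j$ (equivalently, transposing the values $a$ and $b$). Then $\pi_b$ is consistent with $\Pi(i)=b$, the map $\pi_a\mapsto\pi_b$ is a bijection carrying the uniform law of the first conditional distribution onto that of the second (both sets of arrangements have size $(|S|-1)!$), and $\pi_b$ arises from $\pi_a$ by a single swap. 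The hypothesis therefore gives $|h(\pi_a)-h(\pi_b)|\le c$ pointwise, so that $|g(a)-g(b)|=|\E h(\pi_a)-\E h(\pi_b)|\le\E|h(\pi_a)-h(\pi_b)|\le c$.

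Having checked $|Z_i-Z_{i-1}|\le c$ for every $i$, the Azuma--Hoeffding inequality applied to $(Z_i)_{i=0}^{2n}$ completes the argument. The main obstacle is exactly the coupling of the previous paragraph: one must recast the coordinate-revealing increment as a single transposition so that the swap-Lipschitz hypothesis applies, and verify that the coupling preserves the relevant uniform measures; everything else is bookkeeping and the textbook Azuma--Hoeffding bound.
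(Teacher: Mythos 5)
Your proposal is correct: the Doob martingale on the coordinate-exposure filtration, the transposition coupling that converts the swap-Lipschitz hypothesis into the increment bounds $|Z_i-Z_{i-1}|\le c$, and Azuma--Hoeffding over the $2n$ increments yield exactly $2\exp\bigl(-\eta^2/(4c^2n)\bigr)$. Note that the paper does not prove this theorem at all but cites it from the literature (Lemma 11 in \cite{FP}, Section 3.2 in \cite{McDiarmid98}), and your martingale-plus-coupling argument is essentially the standard proof found in those references.
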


\begin{proof}[Proof of Theorem \ref{rg1}] In view of \eqref{up}, it suffices to prove a lower bound, that is, to show that a.a.s.~$\rm_n$ contains $r$-twins of size $\Omega(n^{r/(2r-1)})$. In doing so we are following the proof scheme applied in \cite[Theorem 1.2]{DGR1}. Set
$$a:=r!^{1/(2r-1)}(2n)^{(r-1)/(2r-1)}$$
 and assume for simplicity that both, $a$ and $N:=2n/a=r!^{-1/(2r-1)}(2n)^{r/(2r-1)}$, are integers. Partition  $[2n]=A_1\cup\cdots\cup A_{N}$, where $A_i$'s are consecutive blocks of  $a$ integers each,  and define, for every $1\le i<j\le N$, a random variable $X_{ij}$ which counts the number of edges of $\rm_n$ with one endpoint in $A_i$ and the other in $A_j$. Consider an auxiliary graph $G:=G(\rm_n)$ on vertex set $[N]$ where $\{i,j\}\in G$ if and only if $X_{ij}\ge r$.

A crucial observation is that a matching of size $t$ in $G$ corresponds to $r$-twins in $\rm_n$ of size $t$. Indeed,  let $M=\{i_1j_1,\dots, i_tj_t\}$, $i_1<\cdots<i_t$, be a matching in $G$. For every $1\le s\le t$, let  $u^s_h\in A_{i_s}$ and $v^s_h\in A_{j_s}$, $h=1,\dots,r$, be such that $e^s_h:=\{u^s_h,v^s_h\}\in\rm_n$. Then, the sub-matchings $\{e^1_1,\dots,e^t_1\},\dots,\{e^1_r,\dots,e^t_r\}$, owing to the sequential choice of $A_i$'s form $r$-twins in $\rm_n$. Thus, our ultimate goal is to show that a.a.s.~$G$ contains a matching of size $\Theta(n^{r/(2r-1)})$.

Let $\nu(G)$ be the largest size (as the number of edges) of a matching in $G$. We will first show that $\nu(G)$ is sharply concentrated around its expectation. For this we appeal to the permutation scheme of generating $\rm_n$ and apply Theorem \ref{azuma}.
For a permutation $\pi$ of $[2n]$, let $M(\pi)=\{\pi(1)\pi(2),\dots,\pi(2n-1)\pi(2n)\}$ be the corresponding matching.
 Further, let $h(\pi)=\nu(G(M(\pi)))$.
Observe that if $\pi_2$ is obtained from a permutation $\pi_1$ by swapping some two of its elements, then at most two edges of $M(\pi_1)$ can be destroyed and at most two edges of $M(\pi_1)$ can be created, and thus the same can be said about the edges of $G(M(\pi_1))$. This, in turn, implies that the size of the largest matching has been altered by at most two, that is, $|h(\pi_1)-h(\pi_2)|\le2$. Hence, Theorem~\ref{azuma} applied to $h(\pi)$, with $c=2$, and, say, $\eta=n^{2r/(4r-1)}$ implies that
\begin{equation}\label{concentrat}
\PP\left(|\nu(G)-\E[\nu(G)]|\ge n^{2r/(4r-1)}\right)=\PP\left(|h(\Pi_n)-\E[h(\Pi_n)]|\ge n^{2r/(4r-1)}\right)=o(1).
\end{equation}
Note that, crucially, $n^{2r/(4r-1)}=o(n^{r/(2r-1)})$, and it thus remains to show that $\E(\nu(G))=\Omega(n^{r/(2r-1)})$.

Let us first estimate from below the probability of an edge in $G$, that is, $\PP(X_{ij}\ge r)$. Trivially,
$\PP(X_{ij}\ge r) \ge\PP(X_{ij}=r)$. We are going to further
bound $\PP(X_{ij}=r)$ from below by counting matchings on $[2n]$ with precisely $r$ edges between the sets $A_i$ and $A_j$, but with no edges within $A_i$ or $A_j$ (the latter is a simplifying restriction).
To build such a  matching one has to first select subsets $S_i,S_j,T_i,T_j$ such that $S_i\subset A_i$, $S_j\subset A_j$, $|S_i|=|S_j|=r$, while
$T_i\cap (A_i\cup A_j)=\emptyset$, $T_j\cap (A_i\cup A_j)=\emptyset$, $T_i\cap T_j=\emptyset$, and $|T_i|=|T_j|=a-r$.
The total number of these selections is
$$ \binom ar^2\binom{2n-2a}{a-r}\binom{2n-3a+r}{a-r}.$$
Then one has to match $S_i$ with $S_j$, $A_i\setminus S_i$ with $T_i$, and $A_j\setminus S_j$ with $T_j$, and find a perfect matching  on the set $[2n]\setminus(A_i\cup A_j\cup T_i\cup T_j)$ of the remaining $n-4a+2r$ elements. There are
$$r!(a-r)!(a-r)!\alpha_{n-2a+r}$$
ways of doing so.

Multiplying the two products together and dividing by $\alpha_n$, we obtain the inequality
$$\PP(X_{ij}=r)\ge\frac{a!^24^a(2n-2a)!n!}{r!2^r(a-r)!^2(2n)!(n-2a+r)!}.$$
Since $a=o(\sqrt n)$, we have
$$(2n)!/(2n-2a)! = (1+o(1)) (2n)^{2a}\quad\mbox{and}\quad n!/(n-2a+r)! = (1+o(1))n^{2a-r},$$
and so the R-H-S above equals
$$\frac{(1+o(1))}{r!}\left(\frac{a^2}{2n}\right)^r.$$
Consequently, for $n$ large enough,
$$\PP(X_{ij}\ge r)\ge\PP(X_{ij}=r)\ge\frac1{2r!}\left(\frac{a^2}{2n}\right)^r.$$

Having estimated the probability of an edge in $G$, we are now in position to  estimate the degree of a vertex. For each $i\in[N]$, let $Y_i=\deg_G(i)$ be the degree of vertex $i$ in $G$. Then
\begin{equation}\label{EY}
\E(Y_i)=\left(N-1\right)\PP(X_{ij}\ge r)\ge \frac N{4r!}\left(\frac{a^2}{2n}\right)^r=\frac{2n}{4r!a}\left(\frac{a^2}{2n}\right)^r=\frac 1{4}.
\end{equation}

There is an obvious bound on the size of the largest matching $\nu(G)$ in $G=(V,E)$ in terms of the vertex degrees, namely
$$\nu(G)\ge\frac{|E(G)|}{2\Delta_G}=\frac{\sum_{i=1}^NY_i}{4\Delta_G},$$
where $\Delta_G$ is the maximum degree in $G$. Note that, trivially,  $\Delta_G\le \min\{a/r,N-1\}=a/r$. Unfortunately, since the expected degrees $\E(Y_i)$ are (bounded by) constants -- cf. \eqref{EY}, in this form the bound on $\nu(G)$ is of no use, as one cannot show concentration of all degrees~$Y_i$ simultaneously. Instead, we resort to an even weaker, but more manageable bound.

For an integer $D>0$, let $G_D$ be a subgraph of $G$ induced by the set $V_D$ of all vertices of degrees at most $D$ in $G$, that is, $G_D=G[V_D]$.
Then, clearly, $\Delta(G_D)\le D$ and
$$|E(G_D)| = |E(G)|-|\{e\in E(G):\; e\cap (V\setminus V_D)\neq\emptyset\}|\ge\frac12\sum_{i=1}^NY_i-\sum_{k=D+1}^{\lfloor a/r\rfloor}kZ_k,$$
 where $Z_k=|\{i\in[N]: Y_i=k\}|$, and thus,
$$\nu(G)\ge\nu(G_D)\ge\frac{|E(G_D)|}{2D}\ge\frac1{2D}\left(\frac12\sum_{i=1}^NY_i-\sum_{k>D}kZ_k\right).$$
Hence, recalling \eqref{EY} and noticing that $\E Z_k=N\PP(Y_1=k)$, we have
$$\E(\nu(G))\ge\frac N{2D}\left(\frac1{8}-\sum_{k>D}k\PP(Y_1=k)\right).$$

It remains to estimate $\PP(Y_1=k)$ from above. Very crudely, to create a matching satisfying $Y_1=k$, one has to select $k$ other sets $A_i$, choose $r$ vertices from each of them, and match them with some $kr$ vertices of $A_1$. In the estimates below, we ignore the demand that $Y_1$ is precisely $k$, so, in  fact, we estimate from above $\PP(Y_1\ge k)$. We thus have
$$\PP(Y_1=k)\le\PP(Y_1\ge k)\le\binom Nk\binom ar^k\binom a{rk}(rk)!\frac{\alpha_{n-rk}}{\alpha_n}\le\frac{N^ka^{2rk}2^{rk}n^{rk}}{k!r!^k}\cdot\frac{(2n-2rk)!}{(2n)!}.$$
Using the inequality $1-x\ge e^{-2x}$ valid for $x\le 1/2$, the last fraction can be estimated as
$$\frac{(2n-2rk)!}{(2n)!}=\frac1{(2n)^{2rk}\left(1-\frac1{2n}\right)\cdot\ldots\cdot\left(1-\frac{2rk-1}{2n}\right)}\le\frac{e^{2r^2k^2/n}}{(2n)^{2rk}}.$$
Since $a^{2r-1}=r!n^{r-1}$ and  $k\le a/r$, we have $k^2=o(n)$ and can bound, roughly, $e^{2r^2k^2/n}\le2$.
Consequently, recalling that  $N=2n/a$ and using the bound $(k-1)!\ge ((k-1)/3)^{k-1}$, we infer that
$$k\PP(Y_1=k)\le\frac{2a^{2rk-k}}{(k-1)!r!^k(2n)^{rk-k}}=\frac{2}{(k-1)!}\le2\left(\frac{3}{k-1}\right)^{k-1}\le 2\left(\frac12\right)^{k-1}$$
for $k\ge7$.
Setting $D=6$ and summing over all $k>D$, we thus obtain the bound
$$\sum_{k>D}^{\lfloor a/r\rfloor}k\PP(Y_1=k)\le2\sum_{k>6}^{\infty}\left(\frac{1}{2}\right)^{k-1}=2\sum_{k\ge 6}^{\infty}\left(\frac12\right)^{k}=4\cdot 2^{-6}=\frac 1{16}.$$
Finally,
$$\E(\nu(G))\ge\frac N{12}\left(\frac18-\frac 1{16}\right)\ge\frac N{200}=\Theta\left(n^{{r}/{(2r-1)}}\right)$$
which, together with \eqref{concentrat}, completes the proof.
\end{proof}

\section{Final remarks}\label{fr}
Let us conclude the paper with some suggestions for future studies. The first natural goal is to extend our Erd\H{o}s--Szekers-type results to $r$-uniform ordered matchings for arbitrary $r\geqslant 4$. We have inspected more carefully the case of $r=4$ observing some similar phenomena as in the two smaller cases, $r=2$ and $r=3$. In particular, among the 35 possible relations between pairs of ordered quadruples there are exactly 27 that are collectable. This boosts some hope that by cleverly handling the remaining 8 relations, one can, indeed, generalize Theorem \ref{Theorem E-S for triples} with the exponents $1/9$ replaced by $1/27$.

Returning to the case $r=3$, we are not yet completely done, as we were unable to construct a general counterexample showing the optimality of Theorem \ref{Theorem E-S for triples} for arbitrary values of \emph{all} 9 parameters (c.f.\ Remark \ref{counter}).

\begin{problem}\label{problem1} For all positive integers $a_{XY}$, where $X,Y\in\{\mathtt{L,S,W}\}$, $(X,Y)\neq(\mathtt{L,L})$, construct a matching $M$ of size
$n=\prod_{X,Y}a_{xy}$ such that neither $M$ contains a semi-line of size 2, nor for any pair $(X,Y)$ does it contain a  sub-matching of $a_{XY}+1$ triples every two of which are in relation ${\mathcal R}_{XY}$.
\end{problem}
\noindent As mentioned earlier, setting $a_{\mathtt{LL}}=1$ is not a restriction at all, as in the general case one may simply concatenate $a_{\mathtt{LL}}$ disjoint copies of the matching described in Problem \ref{problem1}

 \medskip

A related problem is to estimate the size of unavoidable patterns  in random $r$-matchings $\rm^{(r)}_n$ with $r\geqslant 3$. For $r=2$, we have established (c.f.\ Theorem~\ref{lowerLSW}) that it is a.a.s.\ of the order $\Theta(\sqrt n)$. As it was already mentioned at the end of Section~\ref{section:random}, as a consequence of Proposition \ref{upbr}, for $r=3$ a.a.s.\ none of the nine collectable sub-matchings (as defined in Subsection~\ref{section:hyper}) has  size bigger than~$2n^{1/3}$.
It would be nice to prove a complementary lower bound. Although for 3-uniform lines this does not seem to be difficult (as, likely, Lemma~\ref{lemma:edges} can be extended for hyperedges),  showing that a.a.s.\  $\rm^{(3)}_n$ contains \emph{every} collectable sub-matching of size $\Omega(n^{1/3})$ will require some new ideas.

For arbitrary $r$, it too seems natural to expect that, as in the case  $r=2$, all  homogeneous substructures (corresponding to collectable relations) of $\rm^{(r)}_n$ should  a.a.s.\  have size  $\Theta(n^{c_r})$, for some constant $0<c_r<1$. By Theorem \ref{lowerLSW} we know that $c_2=1/2$, while guided by Proposition \ref{upbr}, we suspect that $c_r=1/r$.
\bigskip

Other open problems can be formulated for twins in ordered matchings. Based on what we proved here we state the following conjecture.

\begin{conjecture}\label{Conjecture Twins}For every fixed $r\geqslant 2$, we have $t_r^{\match}(n)=\Theta\left(n^{\frac{r}{2r-1}}\right)$.
	\end{conjecture}
The same statement is conjectured for twins in permutations (see \cite{DGR1}), and, by our results, we know that both conjectures are actually equivalent.

 One could also study the size of twins in $r$-uniform ordered matchings or, more generally, in arbitrary ordered graphs or hypergraphs. For ordinary \emph{unordered} graphs there is a  result of Lee, Loh, and Sudakov \cite{LLSud} giving an asymptotically exact answer of order $\Theta(n\log n)^{2/3}$. It would be nice to have an analogue of this result for ordered graphs.

%%%%%%%%%%%%%%%%%%%%%%%%%%%%%%%%%%%%%%%%%%%%%%%%%
% The following optional unnumbered section is where you put personal acknowledgements,
% research grant support, and similar things.  Do not put them on the front page.
\subsection*{Acknowledgements}

The first author was supported in part by Simons Foundation Grant \#522400. The second author was supported in part by Narodowe Centrum Nauki, grant 2020/37/B/ST1/03298. The third author was supported in part by Narodowe Centrum Nauki, grant\linebreak 2018/29/B/ST1/00426.

\providecommand{\bysame}{\leavevmode\hbox to3em{\hrulefill}\thinspace}
\providecommand{\MR}{\relax\ifhmode\unskip\space\fi MR }


\begin{thebibliography}{99}

\bibitem{APP}
M. Axenovich, Y. Person, and S. Puzynina, \emph{A regularity lemma and twins in
  words}, J. Combin. Theory Ser. A \textbf{120} (2013), no.~4, 733--743.
  \MR{3022610}

\bibitem{BaRa}
J. Baik and E.~M. Rains, \emph{The asymptotics of monotone subsequences of
  involutions}, Duke Math. J. \textbf{109} (2001), no.~2, 205--281.
  \MR{1845180}

\bibitem{BCKK}
M. Balko, J. Cibulka, K. Kr\'{a}l, and J. Kyn\v{c}l, \emph{Ramsey numbers of
  ordered graphs}, Electronic J. Combin. \textbf{27} (2020), no.~1, Paper No.
  1.16, 32.

\bibitem{BKP}
I. B\'ar\'any, G. Kalai, and A. P\'or, \emph{Erd{\H o}s--{S}zekeres theorem for
  $k$-flats}, Discrete Comput. Geom. \textbf{69} (2023), 1232--1240.

\bibitem{BGT}
J. Bar\'at, A. Gy\'arf\'as, and G. T\'oth, \emph{Monochromatic spanning trees
  and matchings in ordered complete graphs}, J. Graph Theory \textbf{105}
  (2024), no.~4, 523--541.

\bibitem{BucicSudakovTran}
M. Buci\'{c}, B. Sudakov, and T. Tran, \emph{Erd{\H{o}}s-{S}zekeres theorem for
  multidimensional arrays}, J. Eur. Math. Soc. (JEMS) \textbf{25} (2023),
  no.~8, 2927--2947. \MR{4612106}

\bibitem{BukhMatousek}
B. Bukh and J. Matou\v{s}ek, \emph{{E}rd{\H o}s--{S}zekeres--type statements:
  {R}amsey function and decidability in dimension 1}, Duke Math. J.
  \textbf{163} (2014), 2243--2270.

\bibitem{BukhR}
B. Bukh and O. Rudenko, \emph{Order-isomorphic twins in permutations}, SIAM J.
  Discrete Math. \textbf{34} (2020), no.~3, 1620--1622. \MR{4127099}

\bibitem{ConlonFoxSudakov}
D. Conlon, J. Fox, C. Lee, and B. Sudakov, \emph{Ordered {R}amsey numbers}, J.
  Combin. Theory Ser. B \textbf{122} (2017), 353--383. \MR{3575208}

\bibitem{DGR}
A. Dudek, J. Grytczuk, and A. Ruci\'{n}ski, \emph{Variations on twins in
  permutations}, Electron. J. Combin. \textbf{28} (2021), no.~3, Paper No.
  3.19, 18. \MR{4287698}

\bibitem{DGR_LATIN}
\bysame, \emph{Patterns in ordered (random) matchings}, L{ATIN} 2022:
  theoretical informatics, Lecture Notes in Comput. Sci., vol. 13568, Springer,
  Cham, 2022, pp.~544--556. \MR{4540188}

\bibitem{DGR1}
\bysame, \emph{Multiple twins in permutations}, Austral. J. Combin. \textbf{87}
  (2023), no.~3, 440--451.

\bibitem{EliasMatousek}
M. Eli\'{a}\v{s} and J. Matou\v{s}ek, \emph{Higher--order {E}rd{\H
  o}s--{S}zekeres theorems}, Adv. Math. \textbf{244} (2013), 1--15.

\bibitem{ErdosSzekeres}
P. Erd\H{o}s and G. Szekeres, \emph{A combinatorial problem in geometry},
  Compositio Math. \textbf{2} (1935), 463--470. \MR{1556929}

\bibitem{FoxPachSudakovSuk}
J. Fox, J. Pach, B. Sudakov, and A. Suk, \emph{{E}rd{\H o}s--{S}zekeres--type
  theorems for monotone paths and convex bodies}, Proc. Lond. Math. Soc.
  \textbf{105} (2012), 953--982.

\bibitem{FP}
A. Frieze and B. Pittel, \emph{Perfect matchings in random graphs with
  prescribed minimal degree}, Mathematics and computer science. {III}, Trends
  Math., Birkh\"{a}user, Basel, 2004, pp.~95--132. \MR{2090500}

\bibitem{FJKMV}
Z. F\"{u}redi, T. Jiang, A. Kostochka, D. Mubayi, and J. Verstra\"{e}te,
  \emph{Extremal problems for convex geometric hypergraphs and ordered
  hypergraphs}, Canad. J. Math. \textbf{73} (2021), no.~6, 1648--1666.
  \MR{4350556}

\bibitem{Gauss}
C.~F. Gauss, \emph{Werke}, Georg Olms Verlag, Hildesheim-New York, 1976,
  Erg\"{a}nzungsreihe. IV. Briefwechsel C. F. Gauss--H. W. M. Olbers, I,
  Nachdruck der 1900 Auflage, herausgegeben von C. Schilling. \MR{0532674}

\bibitem{Gawron}
M. Gawron, \emph{Izomorficzne podstruktury w s\l owach i permutacjach},
  {M}aster {T}hesis (2014).

\bibitem{McDiarmid98}
M. Habib, C. McDiarmid, J. Ramirez-Alfonsin, and B. Reed (eds.),
  \emph{Probabilistic methods for algorithmic discrete mathematics}, Algorithms
  and Combinatorics, vol.~16, Springer-Verlag, Berlin, 1998. \MR{1678554}

\bibitem{HJW2019}
T. Huynh, F. Joos, and P. Wollan, \emph{A unified {E}rd{\H{o}}s-{P}{\'{o}}sa
  theorem for constrained cycles}, Combinatorica \textbf{39} (2019), no.~1,
  91--133. \MR{3936194}

\bibitem{VJ}
V. Jel\'{\i}nek, \emph{Dyck paths and pattern-avoiding matchings}, European J.
  Combin. \textbf{28} (2007), no.~1, 202--213. \MR{2261812}

\bibitem{JSW}
J. Justicz, E.~R. Scheinerman, and P.~M. Winkler, \emph{Random intervals},
  Amer. Math. Monthly \textbf{97} (1990), no.~10, 881--889. \MR{1079974}

\bibitem{KZ}
A. Kasraoui and J. Zeng, \emph{Distribution of crossings, nestings and
  alignments of two edges in matchings and partitions}, Electron. J. Combin.
  \textbf{13} (2006), no.~1, Research Paper 33, 12. \MR{2212506}

\bibitem{LLSud}
C. Lee, P.-S. Loh, and B. Sudakov, \emph{Self-similarity of graphs}, SIAM J.
  Discrete Math. \textbf{27} (2013), no.~2, 959--972. \MR{3054584}

\bibitem{MoshkovitzShapira}
G. Moshkovits and A. Shapira, \emph{Ramsey theory, integer partitions and a new
  proof of the {E}rd{\H o}s--{S}zekeres {T}heorem}, Adv. Math. \textbf{262}
  (2014), 1107--1129.

\bibitem{Puttenham}
G. Puttenham, \emph{The arte of English poesie}, London, 1589.

\bibitem{ShtyllaTZ}
B. Shtylla, L. Traldi, and L. Zulli, \emph{On the realization of double
  occurrence words}, Discrete Math. \textbf{309} (2009), no.~6, 1769--1773.
  \MR{2510591}

\bibitem{Stanley}
R.~P. Stanley, \emph{Increasing and decreasing subsequences and their
  variants}, International {C}ongress of {M}athematicians. {V}ol. {I}, Eur.
  Math. Soc., Z\"{u}rich, 2007, pp.~545--579. \MR{2334203}

\bibitem{StanleyCatalan}
\bysame, \emph{Catalan numbers}, Cambridge University Press, New York, 2015.
  \MR{3467982}

\bibitem{SzaboTardos}
T. Szab\'{o} and G. Tardos, \emph{A multidimensional generalization of the
  {E}rd{\H o}s--{S}zekeres lemma on monotone subsequences}, Combin. Prob.
  Comput. \textbf{10} (2001), 557--565.

\bibitem{Tardos}
G. Tardos, \emph{Extremal theory of ordered graphs}, Proceedings of the
  {I}nternational {C}ongress of {M}athematicians---{R}io de {J}aneiro 2018.
  {V}ol. {IV}. {I}nvited lectures, World Sci. Publ., Hackensack, NJ, 2018,
  pp.~3235--3243. \MR{3966530}

\bibitem{PuttenhamEdited}
F. Whigham and W.~A. Rebhorn, \emph{The art of English poesy by {G}eorge
  {P}uttenham}, Cornell University Press, 2007.

\end{thebibliography}
\end{document}